\newtheorem{Theorem}{Theorem}[section]
\newtheorem{Lemma}[Theorem]{Lemma}
\newtheorem{Corollary}[Theorem]{Corollary}
\newtheorem{Remark}[Theorem]{Remark}
\def\depth{\operatorname{depth}}
\def\reg{\operatorname{reg}}
\def\sk{\smallskip\par}
\def\mm{{\mathfrak m}}
\def\FilM{{\mathbb M}}
\def\FilE{{\mathbb E}}
\def\FilF{{\mathbb F}}
\begin{document}
\title{Hilbert coefficients of   good $I$-Filtrations of modules}
\thanks{{\it 2010 Mathematics Subject Classification:}  Primary 13D40, 13A30 \\ {\it Key words and phrases:} good filtration, associated graded module, Hilbert coefficients, Castelnuovo-Mumford regularity.}

\maketitle
\begin{center}
\begin{center}
	LE XUAN DUNG\\
	Faculty of  Natural Sciences,  Hong Duc University\\
	No 565 Quang Trung, Dong Ve, Thanh Hoa, Vietnam\\
	E-mail: lexuandung@@hdu.edu.vn, lxdung27@@gmail.com (L. X. Dung)\\ [15pt]
\end{center}
{\it Dedicated to Professor Le Tuan Hoa on the occasion of his 65$^{th}$ birthday}
  \end{center}

\begin{abstract} Let $M$ be a finitely generated module of dimention $d$ over a  Noetherian local ring $(A,\mm)$ and $I$ an $\mm$-primary ideal. Let be  a pair of good $I$-filtrations $\FilF$ and $\FilF'$ of $M$. We show that the Hilbert coefficients $e_i(\FilF)$ are bounded below and above in terms of $i$, $e_0(\FilF'),...,e_i(\FilF')$, and reduction numbers of $\FilF$ and  $\FilF'$, for all $i \ge 1$.
\end{abstract}

\date{}
\section{Introduction} \sk
Let $A$ be a commutative Noetherian local ring with the maximal ideal $\mm$ and $M$ be a finitely generated $A$-module of dimention $d$. Let $I$ be an ideal of $A$; an $I$-filtration $\FilF$ of $M$ is a collection of submodules $F_n$ such that
$$ M = F_0 \supseteq F_1 \supseteq F_2 \supseteq \cdots \supseteq F_n \supseteq  \cdots $$
with the property that $IF_n \subseteq  F_{n+1}$ for all $n \ge 0$. In the present work we consider only  good $I$-filtrations of $M$: this means that $IF_n =  F_{n+1}$ for all sufficiently large $n$.

The Hilbert-Samuel function $H_{\FilF} (n) = \ell(M/F_{n+1})$ agrees with the Hilbert-Samuel polynomial $P_{\FilF}(n)$ for $n \gg 0$ and we may write
$$P_{\FilF} (n) = e_0(\FilF){n+d \choose d} - e_1(\FilF){n+d-1 \choose d-1} + \cdots + (-1)^d e_d(\FilF).$$
The numbers $e_0(\FilF), e_1(\FilF), ...,e_d(\FilF)$ are called the Hilbert coefficients of $\FilF$. 

The notation of Hilbert function is central in communication algebra and is becoming increasingly importan in algebraic geometry and in computational algebra. Let be a good $I$-filtration $\FilF$ of $M$, the Hilbert-Samuel function and the  Hilbert-Samuel polynomial of  $\FilF$  give a lot of information on $M$. Therefore,   it is of interest to examine properties of the Hilbert coefficients of $\FilF$, see  (\cite{DH,DH2,GO,HH,KM,L2,Na,No,Rh,RTV,ST1,ST2,Tr2}). For further applications, we need to consider another filtration related to $I$ of $M$. Given a pair of good $I$-filtrations $\FilF$ and $\FilF'$ of $M$, we want to compare  $\FilF$ with $\FilF'$. Atiyah-Macdnald (\cite[Propsition 11.4]{AM}) and Brun-Hezog  (\cite[Proposition 4.6.5]{BH}) showed that $e_0(\FilF)=e_0(\FilF')$. In some special cases, Rossi-Vall in \cite{RV} gave alower bounds and upper bounds on $e_1(\FilF)$ in terms of $e_0(\FilF')$, $e_1(\FilF')$, and  other invarians of $M$. How about the other coefficients? The main goal of this paper is to show that $|e_i(\FilF)|$ are bounded by a function depeding only $i$, $e_0(\FilF'),...,e_i(\FilF')$, and reduction numbers of  $\FilF$ and $\FilF'$, for all $i \ge 1$ (see Theorem \ref{maintheorem2}). These bounds are far from being sharp, but they have some interest because very little is known about relationships between $e_0(\FilF), ..., e_d(\FilF)$ and $e_0(\FilF'),...,e_d(\FilF')$.

Our paper is outlined as follows. In the next section, we collect notations and terminology used in the paper and start with a few preliminary results on bounding the length of local homology modules (see Lemma \ref{A4} and Lemma \ref{A5}). In Section 3,  we give new bounds on the Castelnuovo-Mumford regularity $\reg(G(\FilF))$ of $\FilF$ (see Theorem \ref{maintheorem1}) and show that the Hilbert coefficients $e_i(\FilF)$ are bounded below and above in terms of $i$, $e_0(\FilF'),...,e_i(\FilF')$,   and reduction numbers of  $\FilF$ and $\FilF'$, for all $i \ge 1$ (see Theorem \ref{maintheorem2}).

\section{Hilbert coefficients and local cohomomology modules} \label{CMHil}
In this section, we recall notations and terminology used in the paper, and
a number of auxiliary results. Generally, we will follow standard texts in this
research area (cf. \cite{Bour,BS,RV}).

Let $R= \oplus_{n\ge 0}R_n$ be a Noetherian standard graded ring over a local Artinian ring $(R_0,\mm_0)$ such that $R_0/\mm_0$ is an infinite field. Let $E$ be a finitely generated graded $R$-module of dimension $d$. We denote the Hilbert function $\ell_{R_0}(E_t)$ and the Hilbert polynomial of $E$ by $h_E(t)$ and $p_E(t)$, respectively. Writing $p_E(t)$ in the form:
$$p_E (t) = \sum_{i=0}^{d-1} (-1)^i e_i(E){t+d-1-i \choose d-1-i},$$
we call  the numbers $e_i(E)$  {\it Hilbert coefficients} of $E$. 

Let $H_{R^+}^i(E)$, for $i  \ge 0$, denote the $i$-th local cohomology module of  $E$ with respect to $R^+$.  The {\it Castelnuovo-Mumford regularity of $E$} is defined by
$$\reg (E):= \max\{i+j|\,	H^i_{R^+}(E)_j \ne 0, 0 \le i \le d\}$$
and the {\it Castelnuovo-Mumford regularity of $E$  at and above level $1$}  is defined by 
$$\reg^1 (E):= \max\{i+j|\,	H^i_{R^+}(E)_j \ne 0, 0 < i \le d\}.$$

Let $\Delta (E)$ denote the maximal generating degree of $E$. From \cite[Theorem 2]{Tr1}, Dung-Hoa in \cite{DH2} derived an explicit bound for $\reg^1(E)$ in terms of $e_i(E)$, $0 \le i \le d-1$ and $\Delta'(E) = \max\{\Delta(E),\ 0\}.$

\begin{Lemma}\label{A1} \rm{(\cite[Lemma 1.2]{DH2})}
Let $E$ be a finitely generated  graded $R$-module of dimension $d\ge 1$.  
Put
$$\xi_{d-1}(E) = \max\{e_0(E), |e_1(E)|,...,|e_{d-1}(E)|\}.$$
 Then we have
$$\reg^1(E) \le (\xi_{d-1} (E) + \Delta'(E) + 1)^{d!} -2.$$
\end{Lemma}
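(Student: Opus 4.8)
The quickest route is to invoke the a priori bound of Trung \cite[Theorem 2]{Tr1}, which controls the top non-vanishing degrees $a_i(E):=\max\{j : H^i_{R^+}(E)_j\neq 0\}$ in terms of Hilbert-function data, and then to estimate the resulting expression from above by the single quantity $\xi_{d-1}(E)$. Since that bound is monotone in the $|e_i(E)|$ and in $\Delta(E)$, I would replace each of $e_0(E),|e_1(E)|,\dots,|e_{d-1}(E)|$ by their maximum $\xi_{d-1}(E)$ and $\Delta(E)$ by $\Delta'(E)$, which only enlarges it, and then check that it collapses to $(\xi_{d-1}(E)+\Delta'(E)+1)^{d!}-2$. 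Below I describe the substance behind such a bound, namely an induction on $d$ by generic hyperplane sections, since this is what forces the factorial exponent and where the one genuine difficulty lies.

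For the base case $d=1$ one has $\reg^1(E)=a_1(E)+1$ and $p_E(t)\equiv e_0(E)$. The Grothendieck--Serre formula $\ell(E_t)=p_E(t)+\sum_{i\ge 0}(-1)^i\ell(H^i_{R^+}(E)_t)$, together with the vanishing of $H^1_{R^+}(E)_t$ for $t\gg 0$, bounds the largest $t$ with $H^1_{R^+}(E)_t\neq 0$ by an expression in $e_0(E)$ and $\Delta(E)$; since $1!=1$, replacing these by $\xi_0(E)=e_0(E)$ and $\Delta'(E)$ gives the claim in this case.

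For the inductive step I would choose a generic linear form $x\in R_1$, which is filter-regular on $E$ because $R_0/\mm_0$ is infinite, and set $\bar E=E/xE$, of dimension $d-1$. Two standard facts drive the induction. First, $\Delta(\bar E)\le\Delta(E)$, and the exact sequence $0\to(0:_E x)(-1)\to E(-1)\stackrel{x}{\longrightarrow}E\to\bar E\to 0$ together with $\ell(0:_E x)<\infty$ gives $p_{\bar E}(t)=p_E(t)-p_E(t-1)$, whence $e_i(\bar E)=e_i(E)$ for $0\le i\le d-2$, so that $\xi_{d-2}(\bar E)\le\xi_{d-1}(E)$ and $\Delta'(\bar E)\le\Delta'(E)$; the inductive hypothesis then yields $\reg^1(\bar E)\le(\xi_{d-1}(E)+\Delta'(E)+1)^{(d-1)!}-2$. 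Second, the long exact sequence in local cohomology attached to the displayed sequence shows that for every $i\ge 1$ and every $t>\reg^1(\bar E)$ the multiplication $x\colon H^i_{R^+}(E)_{t-1}\to H^i_{R^+}(E)_t$ is surjective, so the lengths $\ell(H^i_{R^+}(E)_t)$ are non-increasing for $t>\reg^1(\bar E)$; consequently $a_i(E)-\reg^1(\bar E)$ is at most the tail length $\sum_{t>\reg^1(\bar E)}\ell(H^i_{R^+}(E)_t)$.

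The crux, and the step I expect to be the main obstacle, is to bound this tail length and to see that it multiplies the exponent by $d$ rather than merely adding a constant. The plan is to estimate the tail, via the Grothendieck--Serre formula, by the values $h_E(t)$ over the $\approx\reg^1(\bar E)$ relevant degrees; each such value is bounded by a polynomial of degree $d-1$ in $t$ with coefficients controlled by $\xi_{d-1}(E)$, and summing over those degrees produces a bound that is essentially the $d$-th power of $\reg^1(\bar E)$. This is precisely where length estimates of the type recorded in Lemma \ref{A4} and Lemma \ref{A5} are indispensable, since the alternating signs in the Grothendieck--Serre formula obstruct any naive bound on the individual $\ell(H^i_{R^+}(E)_t)$. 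Feeding in $\reg^1(\bar E)+2\le(\xi_{d-1}(E)+\Delta'(E)+1)^{(d-1)!}$ and using $\big((\xi_{d-1}(E)+\Delta'(E)+1)^{(d-1)!}\big)^{d}=(\xi_{d-1}(E)+\Delta'(E)+1)^{d!}$, the recursion closes with exactly the exponent $d!$. The delicate bookkeeping will be to verify that all additive and multiplicative slack is absorbed by the normalising summand $+1$ and the final $-2$, so that the clean factorial exponent, and nothing larger, results.
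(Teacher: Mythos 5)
The paper does not prove this lemma at all: it is quoted verbatim from \cite[Lemma 1.2]{DH2}, where it was derived from \cite[Theorem 2]{Tr1} (the author of \cite{Tr1} is V.~Trivedi, incidentally, not Trung). Your opening paragraph --- invoke \cite[Theorem 2]{Tr1}, then use monotonicity in the $|e_i(E)|$ and in $\Delta(E)$ to collapse everything to $(\xi_{d-1}(E)+\Delta'(E)+1)^{d!}-2$ --- is exactly that route, but you leave the collapse itself as ``check that it collapses,'' and this verification is not a formality: Trivedi's bound is a recursively defined expression, and turning it into the clean factorial-exponent form is precisely the content of \cite[Lemma 1.2]{DH2}. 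So, judged as a proof, your proposal reduces the statement either to an unverified computation or to the very reference being cited.

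The gap becomes fatal in your attempted self-contained induction. The crux you yourself isolate --- bounding the tail $\sum_{t>\reg^1(\bar E)}\ell(H^i_{R^+}(E)_t)$ so that the recursion multiplies the exponent by $d$ rather than adding a constant --- is left unresolved, and the tool you propose for it, ``length estimates of the type recorded in Lemma \ref{A4} and Lemma \ref{A5},'' cannot do the job for two reasons. First, those lemmas bound $h^0(M_i)=\ell(H^0_\mm(M_i))$ for quotients of a filtered module over a local ring; they say nothing about tails of the higher graded local cohomology modules $H^i_{R^+}(E)$, which is what your surjectivity argument needs. Second, and decisively, their proofs in this paper invoke Lemma \ref{A1} itself (the proof of Lemma \ref{A4} applies Lemma \ref{A1} to get the regularity bound $m_i$, and Lemma \ref{A5} rests on Lemma \ref{A4}), so appealing to them here is circular. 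The alternating signs in the Grothendieck--Serre formula, which you correctly flag as the obstruction to any naive bound on the individual $\ell(H^i_{R^+}(E)_t)$, are exactly what Trivedi's own induction is designed to circumvent; without reproducing that argument (or simply citing it, as the paper does), your inductive step does not close.
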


Our method in proving the main result is to pass to the associated grade  modules, so we shall recall this notation and some more definitions.

 Let ($A,\mm$) be a Noetherian local  ring with an infinite residue field $K:= A/\mm$ and $M$ a finitely generated $A$-module. (Although the assumption $K$ being  infinite is not essential, because we can tensor $A$ with $K(t)$.)   Given a proper ideal $I$. A chain of submodules
$$\FilF:\ M = F_0 \supseteq F_1 \supseteq F_2 \supseteq \cdots \supseteq F_n \supseteq  \cdots $$
 is called an {\it $I$-filtration} of $M$ if $IF_i \subseteq  F_{i+1}$ for all $i$, and  a {\it good $I$-filtration} if $IF_i =  F_{i+1}$ for all sufficiently large $i$. A module $M$ with a filtration is called a {\it filtered module} (see \cite[III 2.1]{Bour}). If $N$ is a submodule of $M$, then the sequence $\{F_n+N/N\}$ is a good $I$-filtration of $M/N$ and will be denoted by $\FilF/N$.

Throughout the paper we always assume that $I$ is an $\mm$-primary ideal and $\FilF$ is a good $I$-filtration. The {\it associated graded module}  to the filtration $\FilF$ is defined by
$$G(\FilF) = \bigoplus _{n\geq 0}F_n/F_{n+1} .$$
We also say that $G(\FilF)$ is the associated ring of the filtered module $M$. This is a finitely generated graded module over the standard graded ring $G:= G(I,A) := \oplus_{n\ge 0}I^n/I^{n+1}$ (see \cite[Proposition III 3.3]{Bour}). In  particular, when $\FilF$ is the $I$-adic filtration $\{I^nM\}$, $G(\FilF)$ is just the usual associated graded module $G(I,M)$. 

 We call
$H_{\FilF}(n) = \ell(M/F_{n + 1})$
the Hilbert-Samuel function of $M$ w.r.t  $\FilF$. This function agrees with  a polynomial - called the Hilbert-Samuel polynomial  and denoted by $P_{\FilF}(n)$ - for $n \gg 0$.  If we write
$$P_\FilF (t) = \sum_{i=0}^d (-1)^i e_i(\FilF){t+d-i \choose d-i},$$
then the integers $e_i(\FilF)$ are called {\it Hilbert coefficients} of $\FilF$ (see \cite[Section 1]{RV}). When $\FilF = \{I^nM\}$, $H_{\FilF}(n)$ and $P_{\FilF}(n)$ are usually denoted by $H_{I,M}(n)$ and $P_{I,M}(n)$, respectively, and $e_i(\FilF) = e_i(I,M)$. Note that $e_i(\FilF)=e_i(G(\FilF))$ for $0\le i \le d-1$. Then

\begin{Lemma}\label{A11} {\rm (\cite[Proposition 11.4]{AM} and \cite[Proposition 4.6.5]{BH})}
	Let $\FilF$ and $\FilF'$ be good $I$-filtrations of $M$. Then we have
	$$ e_0(G(\FilF))=e_0(\FilF)=e_0(\FilF'). $$
\end{Lemma}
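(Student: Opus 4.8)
The plan is to treat the two equalities separately. For the first, $e_0(G(\FilF)) = e_0(\FilF)$, I would use the relation between the Hilbert function of the associated graded module and the Hilbert--Samuel function of the filtration: since $h_{G(\FilF)}(n) = \ell(F_n/F_{n+1})$, a telescoping sum gives $H_{\FilF}(n) = \ell(M/F_{n+1}) = \sum_{i=0}^{n} h_{G(\FilF)}(i)$. Passing to the associated polynomials and invoking the hockey-stick identity $\sum_{i=0}^{n}\binom{i+k}{k} = \binom{n+k+1}{k+1}$ exhibits $P_{\FilF}$ as the ``summation'' of $p_{G(\FilF)}$; since $\dim G(\FilF) = d$ we have $\deg p_{G(\FilF)} = d-1$, and comparing the two leading coefficients yields $e_0(\FilF) = e_0(G(\FilF))$. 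This is exactly the equality $e_i(\FilF)=e_i(G(\FilF))$ (case $i=0$) recorded in the paragraph preceding the lemma, so it may simply be quoted.

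The substantive point is $e_0(\FilF) = e_0(\FilF')$. First I would establish that any two good $I$-filtrations have \emph{bounded difference}: there is an integer $k \ge 0$ with $F_{n+k} \subseteq F'_n$ and $F'_{n+k} \subseteq F_n$ for all $n$. To obtain this I would compare each filtration with the $I$-adic filtration $\{I^nM\}$. Since $\FilF$ is an $I$-filtration, $IF_n \subseteq F_{n+1}$ gives $I^nM \subseteq F_n$ for all $n$; and goodness, i.e. $F_{n+1} = IF_n$ for $n \ge n_0$, gives $F_{n+n_0} = I^n F_{n_0} \subseteq I^nM$. The same holds for $\FilF'$, and transitivity of the two comparisons produces the desired $k$.

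I would then combine the inclusions into the chain $F_{n+k} \subseteq F'_n \subseteq F_{n-k}$ (valid for $n \ge k$, the second inclusion coming from $F'_{(n-k)+k} \subseteq F_{n-k}$) and pass to colengths, obtaining the sandwich $H_{\FilF}(n-k-1) \le H_{\FilF'}(n-1) \le H_{\FilF}(n+k-1)$. For $n \gg 0$ all three agree with polynomials of degree $d$ whose leading coefficients are $e_0(\FilF)/d!$, $e_0(\FilF')/d!$, and $e_0(\FilF)/d!$ respectively, since an integer shift of the argument does not alter the leading coefficient. Dividing by $n^d/d!$ and letting $n \to \infty$ forces $e_0(\FilF) \le e_0(\FilF') \le e_0(\FilF)$, hence equality.

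The main obstacle is the bounded-difference step: it is the only place where stability (goodness) of the filtrations is genuinely used, and one must be careful with the offsets between the two inclusions and with the index bookkeeping relating $\ell(M/F_{m+1})$ to $H_{\FilF}(m)$. Once bounded difference is secured, the comparison of multiplicities reduces to the routine squeezing argument above.
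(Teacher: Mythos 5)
Your proposal is correct, and it is essentially the argument the paper relies on: the paper gives no proof of its own, simply citing \cite[Proposition 11.4]{AM} and \cite[Proposition 4.6.5]{BH}, and your bounded-difference ($F_{n+k} \subseteq F'_n \subseteq F_{n-k}$) plus squeezing argument, together with the telescoping identification $e_0(\FilF)=e_0(G(\FilF))$ already recorded before the lemma, is exactly the standard proof found in those references.
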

 We call
$$r(\FilF) = \min \{r \geq 0 \mid   F_{n+1} = IF_n \ \ \text{for all} \ \ n \geq  r \}$$
the reduction number of $\FilF$ (w.r.t. $I$). 

 In the case of $I$-adic filtration, $r(\FilF) = 0$. Note that $r:=r(\FilF)$ is always finite, and $F_{r+j} = I^jF_r$ for all $j\ge 0$. This means $\{ F_n\}_{n\ge r}$ is of form of an $I$-adic filtration of $F_r$. In other words, $r$ is the largest generating degree of $G(\FilF)$ as a graded module over $G$. 

Denote the filtration $\FilF/H^0_\mm(M) = \overline{\FilF}$. Let
$$h^0(M) = \ell(H^0_\mm(M)).$$
The
relationship between $\reg(G(\FilF))$ and $\reg(G(\overline{\FilF}))$  is given by the following lemma.
 \begin{Lemma}\label{A2} {\rm (\cite[Lemma 1.9]{DH})}
$\reg(G(\FilF)) \leq  \max \{ \reg(G(\overline{\FilF}));\ r(\FilF)\} + h^0(M).$
\end{Lemma}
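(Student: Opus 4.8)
The plan is to compare $G(\FilF)$ with $G(\overline{\FilF})$ through the short exact sequence of associated graded modules coming from $0\to N\to M\to\overline M\to0$, where $N:=H^0_\mm(M)$ and $\overline M:=M/N$, and then to control the extra term using the length $h:=h^0(M)$ together with $\reg(G(\overline{\FilF}))$.

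First I would record that, giving $N$ the induced filtration $\{F_n\cap N\}$ and $\overline M$ the filtration $\overline{\FilF}=\{(F_n+N)/N\}$, the three associated graded modules fit into an exact sequence of graded $G$-modules
\[ 0\longrightarrow G_N\longrightarrow G(\FilF)\longrightarrow G(\overline{\FilF})\longrightarrow 0,\qquad G_N:=\bigoplus_n (F_n\cap N)/(F_{n+1}\cap N), \]
exactness being checked degree by degree. Since $\ell(N)=h<\infty$ and $\bigcap_nF_n=0$, the chain $\{F_n\cap N\}$ is eventually $0$, so $G_N$ has finite length $h$, whence $H^i_{G^+}(G_N)=0$ for $i>0$ and $H^0_{G^+}(G_N)=G_N$. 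Feeding the sequence into the long exact sequence of local cohomology gives $H^i_{G^+}(G(\FilF))\cong H^i_{G^+}(G(\overline{\FilF}))$ for $i\ge1$ and a short exact sequence in degree $0$; hence
\[ \reg(G(\FilF))\le\max\{\reg(G_N),\ \reg(G(\overline{\FilF}))\}, \]
with $\reg(G_N)=\max\{n\mid F_n\cap N\ne0\}=:n^*$ because $G_N$ has finite length. Thus the lemma reduces to the estimate $n^*\le\max\{\reg(G(\overline{\FilF})),\ r(\FilF)\}+h$.

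To prove this, set $b:=\max\{\reg(G(\overline{\FilF})),\ r(\FilF)\}$ and consider the descending chain $N=N\cap F_0\supseteq N\cap F_1\supseteq\cdots$, of total length $h$, which hits $0$ at step $n^*+1$. The key claim is that this chain has no nonzero plateau above level $b$: for every $n\ge b+1$, if $N\cap F_n=N\cap F_{n+1}$ then already $N\cap F_{n+1}=0$. Granting the claim, the chain strictly decreases at every level $n\ge b+1$ at which it is still nonzero, so, as $\ell(N\cap F_{b+1})\le h$, a strict descent reaches $0$ within $h$ steps; thus $N\cap F_{b+1+h}=0$ and $n^*\le b+h$. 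The example $A=K[[x,y]]$, $M=A/(x^2,xy)$, $I=\mm$, for which $h=1$, $b=0$ and $n^*=1$, shows the bound is attained.

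It remains to prove the claim, and this is the main obstacle. By Krull's intersection theorem it suffices to establish the propagation statement: for $n\ge b+1$, $N\cap F_n=N\cap F_{n+1}$ implies $N\cap F_{n+1}=N\cap F_{n+2}$ (iterating then forces $N\cap F_{n+1}\subseteq\bigcap_m F_m=0$). Here I would use that $K$ is infinite to choose $x\in I$ superficial for $\overline{\FilF}$ and regular on $\overline M$ (possible since $\depth\overline M\ge1$ when $d\ge1$; the case $d=0$ is immediate, being Nakayama applied to $\{I^{n-r}F_r\}$). For $n\ge b\ge\reg(G(\overline{\FilF}))$ the filtration $\overline{\FilF}$ lies in its stable range, so a relation $z=\sum a_ig_i$ with $z\in N\cap F_{n+1}$, $a_i\in I$, $g_i\in F_n$ projects to a relation $\sum a_i\bar g_i=0$ in $\overline M$ among degree-$n$ elements of $\overline{F_n}$; regularity forces such relations to be generated by the trivial (Koszul) ones, and lifting these back to $M$ should push $z$ into $F_{n+2}$ modulo $N\cap F_{n+2}$, which with the plateau hypothesis yields $z\in F_{n+2}$. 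Converting this "regularity $\Rightarrow$ relations are Koszul $\Rightarrow$ the torsion element $z$ is more divisible" heuristic into a rigorous argument — in particular handling several generators when $d\ge2$ by reducing modulo a superficial sequence, and tracking precisely how the stable range of $\overline{\FilF}$ is bounded by $\reg(G(\overline{\FilF}))$ — is the technical heart of the proof.
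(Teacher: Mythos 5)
Your reduction is correct as far as it goes, and it is the natural first move: giving $N=H^0_\mm(M)$ the induced filtration, the sequence $0\to G_N\to G(\FilF)\to G(\overline{\FilF})\to 0$ with $G_N=\bigoplus_n (F_n\cap N)/(F_{n+1}\cap N)$ of finite total length $h^0(M)$ is exact (the middle exactness via the modular law), it identifies $H^i_{G^+}(G(\FilF))$ with $H^i_{G^+}(G(\overline{\FilF}))$ for $i\ge 1$, and it reduces the lemma to showing $F_n\cap N=0$ for all $n>\max\{\reg(G(\overline{\FilF})),\,r(\FilF)\}+h^0(M)$. Note, however, that the paper under review offers no proof of this statement at all: it is quoted verbatim from \cite[Lemma 1.9]{DH}, so what you are attempting to reprove is exactly that cited result, and your reduction is only its easy half.

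The genuine gap is that your ``no nonzero plateau above level $b$'' claim -- which you correctly identify as carrying all the weight -- is never proved. Concretely: for $n\ge r(\FilF)$ one has $F_{n+1}=IF_n$, so the plateau hypothesis $N\cap F_n=N\cap F_{n+1}$ only yields $N\cap F_n=N\cap IF_n$; to run Nakayama (as in your $d=0$ case) you would need $N\cap IF_n=I(N\cap F_n)$, and this inclusion can genuinely be strict, because an $I$-combination $\sum x_iu_i$ with $u_i\in F_n$ may be $\mm$-torsion without the $u_i$ themselves being torsion. Ruling this out for $n$ beyond the regularity of $G(\overline{\FilF})$ is an effective Artin--Rees statement, and it is precisely the substance of \cite[Lemma 1.9]{DH}. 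Your proposed route -- ``for $n\ge\reg(G(\overline{\FilF}))$ every relation $\sum x_i\bar u_i=0$ with $\bar u_i\in\overline{F}_n$ is generated by Koszul relations'' -- is not a direct consequence of the regularity of the associated graded module: such relations are degree-$(n+1)$ syzygies of the Rees module $\bigoplus_n\overline{F}_n$ over the Rees ring $\bigoplus_n I^n$, so you would first need to bound the regularity (or at least the relation type) of the Rees module in terms of $\reg(G(\overline{\FilF}))$, which is itself a nontrivial theorem of Ooishi--Trung type. Since you explicitly defer this step (``the technical heart of the proof''), what you have is a correct and clean reduction plus an unproven claim that constitutes the entire mathematical content of the lemma.
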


 From now on, we will often use the following notation
 \begin{equation*}%\label{eq-1}
\xi_s(\FilF) = \max\{e_0(\FilF), |e_1(\FilF)|,...,|e_s(\FilF)|\},
 \end{equation*}
 where $0\le s \le d$.  We see that
   \begin{equation}\label{eq-111}
  \xi_0(\FilF) \le \xi_1(\FilF) \le ... \le \xi_d(\FilF)=\xi(\FilF).
 \end{equation}

 An element $x \in I$ is called $\FilF$-{\it superficial element} for $I$ if there exists a non-negative integer $c$ such that $(F_{n+1}:_M x) \cap F_c = F_n$ for every $n \geq c$ and we say that a sequence of elements $x_1, ..., x_t$ is an $\FilF$-{\it superficial sequence} for $I$ if, for $i = 1, 2, ..., t$, $ x_i$ is an $\FilF/(x_1,...,x_{i-1})M$-superficial sequence for $I$  (see \cite[Section 1.2]{RV}).  The notion of superficial element is a fundamental tool in our work and we know that superficial sequence  of order $1$ always exist if the  residue field is infinite (see \cite[Proposition 8.5.7]{HS}).
 
 Using the \cite[Proposition 1.2 and Proposition 2.3]{RV} we get
 
  \begin{Lemma}\label{A3}  Let $x_1,...,x_d$ be an $\FilF$-superficial sequence for $I$ and $\overline{M}=M/H^0_\mm(M)$.  
  	Set $M_i=M/(x_1,...,x_i)M$ and $\FilF_i= \FilF/(x_1,...,x_i)M$, where $M_0=M$, $\FilF_0=\FilF$, $0\le i \le d-1$.
  	Then we have
  	
 {\rm i)} $ \xi_j(\overline{\FilF})= \xi_j(\FilF)$ for all $j \le d-1$,
 
{\rm ii)} $\xi_j(\overline{\FilF}/x_1\overline{M}) = \xi_j(\FilF)$ for all $j \le d-1$,
 
{\rm iii)} $ \xi_j(\FilF_i)=\xi_j(\FilF)$ for all $j \le d-i-1$.
 \end{Lemma}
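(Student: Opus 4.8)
The plan is to extract two ``transfer principles'' for Hilbert coefficients from \cite[Proposition 1.2 and Proposition 2.3]{RV} and then chain them together, exploiting the fact that $\xi_j(\cdot)$ depends only on the coefficients $e_0,\dots,e_j$; hence every identity between two $\xi_j$'s that I must prove follows once I know that the corresponding Hilbert coefficients $e_k$ agree for all $0\le k\le j$. The first principle, read off from Proposition 1.2, is that passing to $\overline{M}=M/H^0_\mm(M)$ leaves the lower coefficients unchanged: since $H^0_\mm(M)$ has finite length, the Hilbert--Samuel functions of $\FilF$ and $\overline{\FilF}$ differ eventually by the constant $h^0(M)$, so that $e_k(\overline{\FilF})=e_k(\FilF)$ for every $0\le k\le d-1$ (only the top coefficient $e_d$ can change). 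The second principle, from Proposition 2.3, is that if $\depth M\ge 1$ and $x$ is an $\FilF$-superficial element for $I$, then $x$ behaves like a regular element in large degrees and $P_{\FilF/xM}(t)=P_{\FilF}(t)-P_{\FilF}(t-1)$ eventually; comparing binomial coefficients then gives $e_k(\FilF/xM)=e_k(\FilF)$ for all $0\le k\le d-1$, where $\dim M/xM=d-1$.

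Part (i) is immediate from the first principle. For part (ii), I observe that $\overline{M}$ has positive depth and that an $\FilF$-superficial element remains $\overline{\FilF}$-superficial, because superficiality is insensitive to killing the finite-length submodule $H^0_\mm(M)$; applying the second principle to $x_1$ on $\overline{M}$ gives $e_k(\overline{\FilF}/x_1\overline{M})=e_k(\overline{\FilF})$ for $0\le k\le d-1$, and combining with part (i) yields $\xi_j(\overline{\FilF}/x_1\overline{M})=\xi_j(\FilF)$ for all $j\le d-1$.

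For part (iii) I induct on $i$, the case $i=0$ being trivial. Assuming $e_k(\FilF_{i-1})=e_k(\FilF)$ for $0\le k\le d-i$, the difficulty is that $M_{i-1}$ need not have positive depth, so the second principle does not apply to it directly. To get around this I first pass to $\overline{M_{i-1}}$: by the first principle $e_k(\overline{\FilF_{i-1}})=e_k(\FilF_{i-1})$ for $0\le k\le d-i$, and since $\overline{M_{i-1}}$ has dimension $d-i+1\ge 1$ (so positive depth) and $x_i$ is superficial for it, the second principle gives $e_k(\overline{\FilF_{i-1}}/x_i\overline{M_{i-1}})=e_k(\overline{\FilF_{i-1}})$ for $0\le k\le d-i$. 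Finally $M_i=M_{i-1}/x_iM_{i-1}$ surjects onto $\overline{M_{i-1}}/x_i\overline{M_{i-1}}$ with finite-length kernel (a quotient of $H^0_\mm(M_{i-1})$), and both have dimension $d-i$; hence their Hilbert--Samuel functions differ eventually by a constant and $e_k(\FilF_i)=e_k(\overline{\FilF_{i-1}}/x_i\overline{M_{i-1}})$ for $0\le k\le d-i-1$. Chaining these equalities with the inductive hypothesis gives $e_k(\FilF_i)=e_k(\FilF)$ for $0\le k\le d-i-1$, which is exactly $\xi_j(\FilF_i)=\xi_j(\FilF)$ for $j\le d-i-1$.

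The main obstacle is precisely this depth issue in part (iii): the clean superficial-reduction formula requires $\depth\ge 1$, which can fail at the intermediate quotients $M_{i-1}$. The device that resolves it is to factor each reduction step through $\overline{M_{i-1}}$ and to absorb the discrepancy between $\overline{M_{i-1}}/x_i\overline{M_{i-1}}$ and $M_i$ into the top coefficient $e_{d-i}$ only; this is exactly why the preserved range in (iii) drops by one unit at each stage, down to $d-i-1$. A secondary point to verify carefully is that $\FilF$-superficiality is inherited by the induced filtrations on each $\overline{M_{i-1}}$, so that the second principle is legitimately applicable at every step of the induction.
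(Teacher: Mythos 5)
Your proposal is correct, and for parts (i) and (ii) it coincides with the paper's own argument: (i) is exactly the saturation principle, and (ii) combines $\depth(\overline{M})>0$ with the superficial-reduction principle and part (i). (Minor point: you have the two citations interchanged --- in \cite{RV} it is Proposition 2.3 that gives $e_k(\overline{\FilF})=e_k(\FilF)$ for $k\le d-1$, and Proposition 1.2 that treats superficial elements.) The genuine difference is in part (iii). The paper invokes \cite[Proposition 1.2]{RV} in its depth-free form: for $x_i$ superficial for $\FilF_{i-1}$ (which holds by the very definition of a superficial sequence), one has $\dim M_{i-1}=d-i+1$ and $e_k(\FilF_i)=e_k(\FilF_{i-1})$ for all $k\le d-i-1$, so the induction closes in one line and no saturation is needed. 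You instead assume the reduction principle is available only when $\depth\ge 1$, and you reconstruct the depth-free statement by hand: saturate $M_{i-1}$, reduce by $x_i$ on $\overline{M_{i-1}}$, and compare $M_i$ with $\overline{M_{i-1}}/x_i\overline{M_{i-1}}$ through the finite-length kernel, which perturbs the Hilbert--Samuel function by an eventually constant amount and therefore affects only the top coefficient $e_{d-i}$. This detour is valid (your stabilization argument for the kernel is sound, and it explains conceptually \emph{why} one coefficient is lost at each step --- in effect you re-derive the correction term $\pm\,\ell(0:_{M_{i-1}}x_i)$ hidden in the depth-free version of \cite[Proposition 1.2]{RV}), but it is longer than necessary, and it forces you to know that $x_i$ is superficial for the saturated filtration $\overline{\FilF_{i-1}}$ at \emph{every} step, not just the first. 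That inheritance of superficiality under killing $H^0_\mm$ is the one fact you flag but do not prove; the paper needs it too (only in part (ii)), and it is standard in the framework of \cite{RV}, so it is not a gap --- but in your version of (iii) it carries more weight and deserves an explicit reference or proof.
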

 
 \begin{proof}
 	i) By \cite[Proposition 2.3]{RV}, $e_i(\FilF)=e_i(\overline{\FilF})$, for all $0 \le i \le d-1$. Hence $ \xi_j(\overline{\FilF})= \xi_j(\FilF)$ for all $j \le d-1$.\\
 	ii) We have $\depth(\overline{M})>0$, by \cite[Proposition 1.2]{RV}, 
 	$$e_i(\overline{\FilF}/x_1\overline{M})=e_i(\overline{\FilF}), \text{ for all } 0 \le i \le d-1.$$
 	Therefor  $$\xi_j(\overline{\FilF}/x_1\overline{M}) = \xi_j(\overline{\FilF}),  \text{ for all } 0 \le j \le d-1.$$
 	By i), we get $\xi_j(\overline{\FilF}/x_1\overline{M}) = \xi_j(\FilF)$ for all $j \le d-1$.\\
 	iii) By \cite[Proposition 1.2]{RV}, $\dim(M_{i-1})=d-i+1$ and
 	$$e_k(\FilF_i)=e_k(\FilF_{i-1}/x_iM_{i-1})=e_k(\FilF_{i-1}), \text{ for all } 0 \le k \le d-i-1.$$
 	Hence $e_k(\FilF_{i})=e_k(\FilF)$ for all $ 0 \le k \le d-i-1, 0  \le i \le d-1$. Therefor  $ \xi_j(\FilF_i)=\xi_j(\FilF)$ for all $j \le d-i-1$. 
\end{proof}

We can improve the bounds in \cite[Lemma 1.10 and Lemma 1.11]{DH2}. In the following results, we can replace $\reg(G(\FilF))$ by the Hilbert coefficents of $\FilF$. \begin{Lemma}\label{A4}  Let  $\FilF$ a good $I$-filtration of $M$ and  $x_1, x_2, ..., x_d$ be an $\FilF$-superficial sequence for $I$. Set $M_i = M/(x_1,...,x_i)M$ and $\FilF_i = \FilF/(x_1,...,x_i)M$  where $M_0 = M$ and $\FilF_0 = \FilF$. Then we have 	
$$h^0(M_i) \le \sum_{k=0}^{i} \xi_{d-i+k}(\FilF)(\xi_{d-i-1+k}(\FilF)+r(\FilF)+1)^{(d-i+k).(d-i+k)!},$$ for all $0 \leq i \leq d-1.$
\end{Lemma}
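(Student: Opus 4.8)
The plan is to argue by induction on $i$, peeling off the superficial elements $x_1,\dots,x_i$ one at a time, and to reduce the whole estimate to a single per-step bound together with a base estimate for the top-dimensional module. Throughout, Lemma \ref{A3} is the bookkeeping device that lets me replace the Hilbert coefficients of the successive quotient filtrations $\FilF_j$ and of the torsion-free reductions $\overline{\FilF_j}$ by those of $\FilF$ itself: part (iii) gives $\xi_\ell(\FilF_j)=\xi_\ell(\FilF)$ for $\ell\le d-j-1$, which is exactly the range needed to feed Lemma \ref{A1}, while Lemma \ref{A11} keeps the multiplicity $e_0$ constant along the chain. The target is the telescoped inequality $h^0(M_i)\le\sum_{m=d-i}^{d}T_m$, where $T_m:=\xi_m(\FilF)(\xi_{m-1}(\FilF)+r(\FilF)+1)^{m\cdot m!}$; the reindexing $m=d-i+k$ then produces the displayed sum.

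For the base case $i=0$ I would bound $h^0(M)=\ell(H^0_\mm(M))$ by passing to the induced filtration $\{H^0_\mm(M)\cap F_n\}$, using $\bigcap_n F_n=0$ to write $h^0(M)=\sum_{n\ge0}\ell\big((H^0_\mm(M)\cap F_n)/(H^0_\mm(M)\cap F_{n+1})\big)$. Each graded piece injects into $G(\FilF)_n=F_n/F_{n+1}$, so the partial sums are dominated by the Hilbert--Samuel values $\ell(M/F_{N+1})$, and the pieces vanish once $n$ passes a regularity-type threshold of $G(\FilF)$. Estimating this threshold through the depth-positive reduction $\overline{M}$ and Lemma \ref{A1} applied to $G(\overline{\FilF})$ (whose dimension is $d$, whose relevant coefficients satisfy $\xi_{d-1}(\overline{\FilF})=\xi_{d-1}(\FilF)$, and whose maximal generating degree is controlled by $r(\FilF)$) converts the threshold into $(\xi_{d-1}(\FilF)+r(\FilF)+1)^{d!}$. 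Feeding this into the polynomial growth of the Hilbert--Samuel function contributes a further factor of $d$ in the exponent and the multiplicity factor $\xi_d(\FilF)$, yielding $h^0(M)\le T_d$.

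The inductive step is the same mechanism applied one dimension lower. Modding out by the superficial element $x_j$ and using the exact sequence $0\to(0:_{M_{j-1}}x_j)\to M_{j-1}\xrightarrow{x_j}M_{j-1}\to M_j\to0$, together with $(0:_{M_{j-1}}x_j)\subseteq H^0_\mm(M_{j-1})$ (since a superficial element has $(0:x)$ of finite length), I would compare $H^0_\mm(M_j)$ with $H^0_\mm(M_{j-1})$ and bound the defect $h^0(M_j)-h^0(M_{j-1})$ by a regularity-type invariant of $G(\FilF_j)$, which has dimension $d-j$. Running the piece-counting estimate of the base case on $G(\FilF_j)$, with Lemma \ref{A1} producing the exponent $(d-j)!$ boosted to $(d-j)(d-j)!$ and Lemma \ref{A3}(iii) identifying the needed coefficients with $\xi_{d-j-1}(\FilF)$ and $e_0(\FilF)\le\xi_{d-j}(\FilF)$, gives $h^0(M_j)\le h^0(M_{j-1})+T_{d-j}$. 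Summing over $j=1,\dots,i$ and adding the base estimate yields the claim.

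The hard part will be the comparison between the $\mm$-local cohomology length $h^0$ and the regularity of the associated graded module, carried out so as to avoid circularity with Lemma \ref{A2}: that lemma bounds $\reg(G(\FilF))$ in terms of $h^0(M)$, i.e. in the wrong direction, so the threshold controlling the vanishing of the graded pieces must be estimated through $\reg^1$ (equivalently through $G(\overline{\FilF})$, where the torsion has been removed and Lemma \ref{A1} applies cleanly) rather than through $\reg(G(\FilF))$ directly. Pinning down the precise threshold, the per-degree length bound, and the bookkeeping that turns Lemma \ref{A1}'s exponent $m!$ into $m\cdot m!$ is where the genuine care is required; the remaining algebra is routine.
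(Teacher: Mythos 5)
Your overall architecture --- the telescoping recursion $h^0(M_j)\le h^0(M_{j-1})+T_{d-j}$ with $T_m=\xi_m(\FilF)(\xi_{m-1}(\FilF)+r(\FilF)+1)^{m\cdot m!}$, a base case obtained by evaluating the Hilbert--Samuel function at a threshold controlled via Lemma \ref{A1} applied to $G(\overline{\FilF})$, and the exponent boost from $(d-j)!$ to $(d-j)(d-j)!$ coming from the degree of the Hilbert--Samuel polynomial --- is exactly the paper's, and your base case is sound (it is in essence a proof of the cited ingredient \cite[Lemma 1.6]{DH2}). But the justification you give for the inductive step has a genuine gap. The exact sequence $0\to(0:_{M_{j-1}}x_j)\to M_{j-1}\xrightarrow{x_j}M_{j-1}\to M_j\to 0$ does not let you bound the defect $h^0(M_j)-h^0(M_{j-1})$ by data of $G(\FilF_j)$: the long exact sequence of local cohomology identifies that defect (up to the harmless term $\ell(0:_{M_{j-1}}x_j)$) with the length of the image of the connecting map $H^0_\mm(M_j)\to H^1_\mm\bigl(M_{j-1}/(0:x_j)\bigr)$, i.e.\ with a piece of the $x_j$-torsion of $H^1_\mm(M_{j-1})$, and neither Lemma \ref{A1} nor Lemma \ref{A3} gives any control over $H^1_\mm(M_{j-1})$. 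The jump of $h^0$ under cutting by a superficial element is exactly the difficulty here, and the exact sequence merely renames it.

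If instead you run your base-case piece-counting bound directly on $M_j$ (which is what would bound $h^0(M_j)$ itself, not the defect), you need the full Hilbert--Samuel polynomial $P_{\FilF_j}$, hence all of $e_0(\FilF_j),\dots,e_{d-j}(\FilF_j)$. Lemma \ref{A3}(iii) identifies only $e_k(\FilF_j)=e_k(\FilF)$ for $k\le d-j-1$; the last coefficient $e_{d-j}(\FilF_j)$ is precisely the one that changes when you pass from $\FilF_{j-1}$ to $\FilF_j$, and it is nowhere addressed in your proposal (your bookkeeping list ``$\xi_{d-j-1}(\FilF)$ and $e_0(\FilF)\le\xi_{d-j}(\FilF)$'' omits it). The paper's key step --- and the actual mechanism by which $h^0(M_{j-1})$ enters the recursion --- is the estimate $|e_{d-j}(\FilF_j)|\le\xi_{d-j}(\FilF_{j-1})+h^0(M_{j-1})=\xi_{d-j}(\FilF)+h^0(M_{j-1})$, taken from the proof of \cite[Lemma 1.10]{DH2}; this is also why the leading factor of $T_{d-j}$ is $\xi_{d-j}(\FilF)$ rather than $\xi_{d-j-1}(\FilF)$, a discrepancy your accounting cannot produce. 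Without this jump estimate for the last Hilbert coefficient (or a substitute for it), your induction does not close.
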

\begin{proof}
i) As mentioned above $G(\overline{\FilF})$ is generated by elements of degrees at most $ r(\overline{\FilM}) \ge 0$. Therefore, by \cite[Lemma 1.8]{DH} and Lemma \ref{A1}, we have 
\begin{align*}\reg(G(\overline{\FilF}_i)) = \reg^1(G(\overline{\FilF}_i)) & \leq
	(\xi_{d-i-1}(\overline{\FilF}_i)+\Delta'(\overline{\FilF}_i)+1)^{(d-i)!}-2  \\ 
	& = (\xi_{d-i-1}(\overline{\FilF}_i)+\Delta(\overline{\FilF}_i)+1)^{(d-i)!}-2  \\ 
	& =(\xi_{d-i-1}(\overline{\FilF}_i)+r(\overline{\FilF}_i)+1)^{(d-i)!}-2 .
\end{align*}	
 From Lemma \ref{A3} i) and iii) we get  $\xi_{d-i-1}(\overline{\FilF}_i) = \xi_{d-i-1}(\FilF_i)= \xi_{d-i-1}(\FilF)$ and $r(\overline{\FilF}_i) \le r(\FilF)$, therefore 
 $$\reg(G(\overline{\FilF_i})) \le (\xi_{d-i-1}(\FilF)+r(\FilF)+1)^{(d-i)!}-2 =:m_i.$$
 
For $i = 0$, by Lemma \cite[Lemma 1.6]{DH2}, we have
\begin{align*}
h^0(M_0) & =  h^0(M)  \leq P_{\FilF}(m_0)  \leq\xi_d(\FilF) \sum^d_{j = 0}{d+m_0-j \choose d-j} \\ 
&   = \xi_d(\FilF) {m_0+d+1 \choose d} \leq \xi_d(\FilF)(m_0+2)^d =\xi_d(\FilF) (\xi_{d-1}(\FilF)+r(\FilF)+1)^{d.d!}.
\end{align*}
 % &  \le \xi_d(\FilX) (\xi_{d-1}(\FilX)+r(\FilX)+1)^{d.d!}.
          
For $0 < i \leq d-1$, by \cite[Proposition 1.2]{RV}, we have $e_j(\FilF_i) = e_j(\FilF_{i-1})$ for all $0 \leq j \leq d-i-1$. Similarly, as in the proof of \cite[Lemma 1.10]{DH2} and Lemma \ref{A3} iii) we have
\begin{equation*}%\label{eq-2}
|e_{d-i}(\FilF_i)| \leq  \xi_{d-i}(\FilF_{i-1}) + h^0(M_{i-1}) \le \xi_{d-i}(\FilF) + h^0(M_{i-1}). 
\end{equation*}
It implies that
$$\begin{array}{lll}
h^0(M_i)
                & \le \xi_{d-i}(\FilF){m_i+d-i+1 \choose d-i} -\xi_{d-i}(\FilF) + |e_{d-i}(\FilF_i)|  & \\
                & \leq \xi_{d-i}(\FilF)(m_i+2)^{d-i}  + h^0(M_{i-1})  & \\% (\text{by (\ref{eq-2})}) \\
                          &  \leq \xi_{d-i}(\FilF) (\xi_{d-i-1}(\FilF)+r(\FilF)+1)^{(d-i)(d-i)!}+ &\\
                  &     + \sum_{k=0}^{i-1} \xi_{d-i+1+k}(\FilF)(\xi_{d-i+k}(\FilF)+r(\FilF)+1)^{(d-i+1+k).(d-i+1+k)!} & \\ & 
                  \, \, \, \, \, \, \, \, \, \, \, \, \, \, \, \, \, \, \, \, \, \, \, \, \, \, \, 
                   \, \, \, \, \, \, \, \, \, \, \, \, \, \, \, \, \, \, \, \, \, \, \, \, \, \, \, 
                    \, \, \, \, \, \, \, \, \, \, \, \, \, \, \, \, \, \, \, \, \, \, \, \, \, \, \, 
                     \, \, \, \, \, \, \, \, \, \, \, \, \, \, \, \, \, \, \, \, \, \, \, \, \, \, \,  (\text{by induction hypothesis})\\
 & =  \sum_{k=0}^{i} \xi_{d-i+k}(\FilF)(\xi_{d-i+k-1}(\FilF)+r(\FilF)+1)^{(d-i+k).(d-i+k)!}. & 
\end{array}$$
\end{proof}

\begin{Lemma} \label{A5} Set $B = \ell(M/(x_1,x_2,...,x_d)M)$, where $x_1, x_2, ..., x_d$ be an $\FilF$-superficial sequence for $I$ and put $\xi_{-1}=0$.  We have	
$$B \le  \sum_{k=0}^{d}\xi_{k}(\FilF)(\xi_{k-1}(\FilF)+r(\FilF)+1)^{k.k!}.$$
\end{Lemma}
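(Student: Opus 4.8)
The plan is to reduce to dimension one and then feed the result into Lemma \ref{A4}. Keep the notation $M_i = M/(x_1,\dots,x_i)M$ and $\FilF_i = \FilF/(x_1,\dots,x_i)M$, so that $M_d$ has finite length $B$, while $M_{d-1}$ has dimension $1$ and $M_d = M_{d-1}/x_d M_{d-1}$ with $x_d$ an $\FilF_{d-1}$-superficial element for $I$. (When $d=0$ the claim is trivial, since then $B=\ell(M)=e_0(\FilF)=\xi_0(\FilF)$ is the single term of the sum.) The target sum, written out with $\xi_{-1}=0$, has $k=0$ term equal to $\xi_0(\FilF)(\xi_{-1}+r(\FilF)+1)^{0}=\xi_0(\FilF)=e_0(\FilF)$, and its remaining terms $k=1,\dots,d$ coincide, after reindexing $k\mapsto k+1$, with the bound of Lemma \ref{A4} for $h^0(M_{d-1})$; so the whole proof amounts to establishing the inequality $B\le e_0(\FilF)+h^0(M_{d-1})$.

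To get that inequality I would first peel off the finite-length part of $M_{d-1}$. Writing $H=H^0_\mm(M_{d-1})$ and $\overline{M_{d-1}}=M_{d-1}/H$, tensoring the short exact sequence $0\to H\to M_{d-1}\to \overline{M_{d-1}}\to 0$ with $A/(x_d)$ gives the right-exact sequence
$$H/x_d H \To M_{d-1}/x_d M_{d-1} \To \overline{M_{d-1}}/x_d\overline{M_{d-1}} \To 0,$$
whence $B=\ell(M_{d-1}/x_d M_{d-1}) \le \ell(H/x_d H)+\ell(\overline{M_{d-1}}/x_d\overline{M_{d-1}}) \le h^0(M_{d-1})+\ell(\overline{M_{d-1}}/x_d\overline{M_{d-1}})$.

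The crux is then to prove $\ell(\overline{M_{d-1}}/x_d\overline{M_{d-1}})=e_0(\FilF)$. Here $\overline{M_{d-1}}$ is one-dimensional of positive depth, so $x_d$ is a nonzerodivisor on it; denoting the filtration $\overline{\FilF_{d-1}}$ by $\{G_n\}$, I would use the defining property of the superficial element to show that for $n\gg 0$ one has $(G_{n+1}:_{\overline{M_{d-1}}}x_d)=G_n$ and $G_{n+1}+x_d\overline{M_{d-1}}=\overline{M_{d-1}}$. Computing $\ell\big(\overline{M_{d-1}}/(G_{n+1}+x_d\overline{M_{d-1}})\big)$ through these identifications (using that multiplication by $x_d$ is injective) reduces $\ell(\overline{M_{d-1}}/x_d\overline{M_{d-1}})$ to the first difference $H_{\overline{\FilF_{d-1}}}(n)-H_{\overline{\FilF_{d-1}}}(n-1)$ of the Hilbert--Samuel function, which for $n\gg0$ equals the leading coefficient $e_0(\overline{\FilF_{d-1}})$; by the preservation of $e_0$ under passage to $\overline{\FilF}$ and under superficial reduction (Lemma \ref{A3} and \cite[Proposition 1.2, Proposition 2.3]{RV}) this is $e_0(\FilF)=\xi_0(\FilF)$. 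This one-dimensional length computation is the main obstacle, being the only place that genuinely uses the superficial hypothesis rather than bookkeeping.

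Finally I would assemble the pieces. Applying Lemma \ref{A4} with $i=d-1$ and reindexing gives
$$h^0(M_{d-1}) \le \sum_{k=0}^{d-1} \xi_{k+1}(\FilF)\big(\xi_{k}(\FilF)+r(\FilF)+1\big)^{(k+1)(k+1)!} = \sum_{k=1}^{d} \xi_{k}(\FilF)\big(\xi_{k-1}(\FilF)+r(\FilF)+1\big)^{k\cdot k!},$$
and adding the $k=0$ term $\xi_0(\FilF)=e_0(\FilF)$ yields
$$B \le e_0(\FilF)+h^0(M_{d-1}) \le \sum_{k=0}^{d} \xi_{k}(\FilF)\big(\xi_{k-1}(\FilF)+r(\FilF)+1\big)^{k\cdot k!},$$
which is the asserted bound.
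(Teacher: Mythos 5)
Your proposal follows the same route as the paper's own proof. The paper also reduces the whole statement to the single inequality $B \le e_0(\FilF) + h^0(M_{d-1})$ (which it imports from the proof of \cite[Lemma 1.11]{DH2} rather than reproving), and then concludes exactly as you do: apply Lemma \ref{A4} with $i=d-1$, reindex $k\mapsto k+1$, and absorb $e_0(\FilF)=\xi_0(\FilF)$ as the $k=0$ term of the sum (legitimate since $\xi_{-1}=0$ and the exponent $0\cdot 0!=0$). So the only substantive difference is that you open up the black box and prove the key inequality directly, via the exact sequence $H/x_dH \To M_d \To \overline{M_{d-1}}/x_d\overline{M_{d-1}} \To 0$ and the one-dimensional multiplicity computation $\ell(\overline{M_{d-1}}/x_d\overline{M_{d-1}}) = e_0(\FilF)$. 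That computation is indeed the crux, and your overall plan for it is the standard and correct one.

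One identity in that plan is wrong as stated and must be fixed: you claim $G_{n+1}+x_d\overline{M_{d-1}}=\overline{M_{d-1}}$ for $n\gg 0$, but this is impossible --- since $G_{n+1}\subseteq \mm\,\overline{M_{d-1}}$ for large $n$, Nakayama's lemma would force $x_d\overline{M_{d-1}}=\overline{M_{d-1}}$. What you actually need is the inclusion $G_{n+1}\subseteq x_d\overline{M_{d-1}}$ for $n\gg 0$, which holds because $\overline{M_{d-1}}/x_d\overline{M_{d-1}}$ has finite length (hence is killed by a power of $\mm$) while $G_{n+1}\subseteq \mm^{n+1-r(\FilF)}\overline{M_{d-1}}$. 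With this inclusion, injectivity of $x_d$ and $(G_{n+1}:x_d)=G_n$ give $x_d\overline{M_{d-1}}/G_{n+1}\cong \overline{M_{d-1}}/G_n$, so $\ell(\overline{M_{d-1}}/x_d\overline{M_{d-1}})=H_{\overline{\FilF_{d-1}}}(n)-H_{\overline{\FilF_{d-1}}}(n-1)=e_0(\overline{\FilF_{d-1}})=e_0(\FilF)$ for $n\gg 0$, exactly as you intended. With that correction your argument is complete and matches the paper's.
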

\begin{proof}
Take  the proof of the \cite[Lemma 1.11]{DH2}. We have
\begin{equation}\label{eq:B4}
 B \leq e_0(\FilF) + h^0(M_{d-1}).
\end{equation}
 By Lemma \ref{A4}, $h^0(M_{d-1}) \le \sum_{k=0}^{d-1} \xi_{1+k}(\FilF)(\xi_{k}(\FilF)+r(\FilF)+1)^{(1+k).(1+k)!}$ .  From this  estimation we immediately get 
\begin{align*}
B & \le e_0(\FilF) + \sum_{k=0}^{d-1}\xi_{1+k}(\FilF)(\xi_k(\FilF)+r(\FilF)+1)^{(1+k)(1+k)!} \\
&= \xi_0(\FilF) + \sum_{k=1}^{d}\xi_{k}(\FilF)(\xi_{k-1}(\FilF)+r(\FilF)+1)^{k.k!}\\
&=  \sum_{k=0}^{d}\xi_{k}(\FilF)(\xi_{k-1}(\FilF)+r(\FilF)+1)^{k.k!}.
\end{align*}
\end{proof}

\section{Main results}
Throughout this section, $\FilF$ and $\FilF'$ will be a pair of good $I$-filtrations  of a finitely generated module $M$ over a local ring $(A,\mm)$, where $I$ is an $\mm$-primary ideal. The aim of this section is to show that the Hilbert coefficients $e_i(\FilF)$ are bounded below and above in terms of $e_0(\FilF'),...,e_i(\FilF')$, $i$, $r(\FilF)$, and $r(\FilF')$, for all $i \ge 1$. 

In order to prove the main result of this paper, we need bound on the  Castelnouvo-Mumford regularity $\reg(G(\FilF))$ of $\FilF$ in terms of $d$, $e_0(\FilF'),...,e_d(\FilF')$,  $r(\FilF)$, and  $r(\FilF')$.

\begin{Lemma} \label{B1}{\rm (\cite[Proof of Theorem 1.5]{DH})}
Let $\dim M=d\geq 2$, $x$ be an  $\FilF$-superficial sequence for $I$. We have
$$\reg^1(G(\overline{\FilF})/x^*G(\overline{\FilF})) = \reg^1(G(\overline{\FilF}/x\overline{M})).$$
\end{Lemma}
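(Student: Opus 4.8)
The plan is to realise the two modules as consecutive terms of a single short exact sequence whose remaining term is of finite length, and then to read off the equality of $\reg^1$ from the long exact sequence of local cohomology. Set $N=\overline{M}$ and let $\{N_n\}$ denote the submodules of the filtration $\overline{\FilF}$, so that $G(\overline{\FilF})=\bigoplus_n N_n/N_{n+1}$. Since $d\ge 2$ and $N=M/H^0_\mm(M)$ has positive depth, the $\FilF$-superficial element $x$ descends to a superficial element for $\overline{\FilF}$ which is moreover a nonzerodivisor on $N$ (cf. \cite[Proposition 1.2]{RV}); this regularity is what makes the kernel computation below clean.

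First I would write down the canonical comparison map explicitly. The leading form $x^\ast\in G_1$ acts as $N_{n-1}/N_n\to N_n/N_{n+1}$, $a+N_n\mapsto xa+N_{n+1}$, so that $\big(G(\overline{\FilF})/x^\ast G(\overline{\FilF})\big)_n=N_n/(N_{n+1}+xN_{n-1})$. Because $xN_{n-1}\subseteq x\overline{M}$, there is a natural graded surjection
$$\phi:\ G(\overline{\FilF})/x^\ast G(\overline{\FilF})\ \twoheadrightarrow\ G(\overline{\FilF}/x\overline{M}),\qquad N_n/(N_{n+1}+xN_{n-1})\ \to\ (N_n+x\overline{M})/(N_{n+1}+x\overline{M}).$$
Using the modular law and $N_{n+1}\subseteq N_n$, I would identify its kernel $C$ with the graded module $\bigoplus_n (N_n\cap x\overline{M})/\big((N_{n+1}\cap x\overline{M})+xN_{n-1}\big)$.

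Next I would prove that $C$ has finite length. As $x$ is a nonzerodivisor on $N$, one has $N_n\cap x\overline{M}=x(N_n:_N x)$, and superficiality together with $\depth N>0$ gives the Valabrega--Valla type equality $(N_n:_N x)=N_{n-1}$ for all $n\gg0$; hence $N_n\cap x\overline{M}=xN_{n-1}$ lies in the denominator and $C_n=0$ for $n\gg0$. Since each graded piece $C_n$ is a finitely generated module over the Artinian ring $G_0=A/I$, it has finite length, and therefore so does $C$. Consequently $H^i_{G^+}(C)=0$ for all $i\ge 1$, while $H^0_{G^+}(C)=C$.

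Finally I would feed the short exact sequence $0\to C\to G(\overline{\FilF})/x^\ast G(\overline{\FilF})\to G(\overline{\FilF}/x\overline{M})\to 0$ into the long exact sequence of local cohomology with respect to $G^+$. For every $i\ge 1$ both $H^i_{G^+}(C)$ and $H^{i+1}_{G^+}(C)$ vanish, so $H^i_{G^+}(G(\overline{\FilF})/x^\ast G(\overline{\FilF}))\cong H^i_{G^+}(G(\overline{\FilF}/x\overline{M}))$ as graded modules, in every internal degree. Since $\reg^1$ is computed solely from the local cohomology in cohomological degrees $\ge1$, these isomorphisms yield $\reg^1(G(\overline{\FilF})/x^\ast G(\overline{\FilF}))=\reg^1(G(\overline{\FilF}/x\overline{M}))$ at once. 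I expect the genuine obstacle to be the bookkeeping in the kernel identification and the finite-length claim—namely deducing the Valabrega--Valla condition from superficiality—rather than the cohomological step; this is also exactly why the statement is framed for $\reg^1$ and not for $\reg$, as the entire discrepancy $C$ between the two modules is concentrated in $H^0_{G^+}$.
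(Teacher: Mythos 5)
Your proof is correct and follows essentially the same route as the source this paper cites for the lemma (\cite[Proof of Theorem 1.5]{DH}): there, too, $G(\overline{\FilF})/x^*G(\overline{\FilF})$ and $G(\overline{\FilF}/x\overline{M})$ are compared via the natural graded surjection whose kernel is concentrated in finitely many degrees (hence of finite length, by superficiality together with $\depth \overline{M}>0$), so that the long exact sequence of local cohomology identifies $H^i_{G^+}$ in all degrees for every $i\ge 1$. Since the paper gives no independent proof beyond this citation, there is nothing further to compare.
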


\begin{Theorem} \label{maintheorem1}
Let $\FilF$ and $\FilF'$ be are good $I$-filtrations of $M$ with  $\dim(M) = d \geq 1$ 
$$\FilF:\ M = F_0 \supseteq F_1 \supseteq F_2 \supseteq \cdots \supseteq F_n \supseteq  \cdots $$
$$\FilF':\ M = F'_0 \supseteq F'_1 \supseteq F'_2 \supseteq \cdots \supseteq F'_n \supseteq  \cdots $$
    Then
    
{\rm i)}  $\reg(G(\FilF)) \le  (\xi(\FilF')+r(\FilF')+1)(\xi(\FilF')+r(\FilF)+1)-2$ if $d=1$,

{\rm ii)}  $\reg(G(\FilF)) \le  (\xi(\FilF')+r(\FilF')+1)^6(\xi(\FilF')+r(\FilF)+1)-3$ if $d=2$,

{\rm iii)} $\reg(G(\FilF)) \le   (\xi(\FilF')+r(\FilF')+1)^{d(d+1)!-d}(\xi(\FilF')+r(\FilF)+1)^{(d-1)!}-d$ if $d \ge 3.$
\end{Theorem}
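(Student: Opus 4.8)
The plan is to bound $\reg(G(\FilF))$ in two independent stages and then combine them, tracking the exponents by induction on $d$. Stage one bounds $\reg(G(\FilF))$ in terms of the \emph{intrinsic} data $\xi(\FilF)$ and $r(\FilF)$, and only reassembles the machinery of Section~\ref{CMHil}. Stage two, where the genuinely new work lies, bounds $\xi(\FilF)$ in terms of $\xi(\FilF')$, $r(\FilF)$ and $r(\FilF')$ by comparing the two filtrations directly. Substituting the output of stage two into stage one gives a bound of the required shape, and a bookkeeping induction produces the precise exponents in the three cases $d=1$, $d=2$, $d\ge 3$.

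For stage one I would first apply Lemma~\ref{A2} to pass to $\overline{M}=M/H^0_\mm(M)$, obtaining $\reg(G(\FilF))\le\max\{\reg(G(\overline{\FilF})),\,r(\FilF)\}+h^0(M)$. Since $\overline{M}$ has positive depth, the argument used in the proof of Lemma~\ref{A4} gives $\reg(G(\overline{\FilF}))=\reg^1(G(\overline{\FilF}))$, so Lemma~\ref{A1} applied to $G(\overline{\FilF})$ (whose maximal generating degree is $r(\overline{\FilF})\le r(\FilF)$ and whose coefficients are the $e_i(\overline{\FilF})$) bounds this by $(\xi_{d-1}(\overline{\FilF})+r(\overline{\FilF})+1)^{d!}-2$; Lemma~\ref{A3} then rewrites $\xi_{d-1}(\overline{\FilF})=\xi_{d-1}(\FilF)$. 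The term $h^0(M)$ is controlled by Lemma~\ref{A4} with $i=0$. For the explicit exponents in higher dimension I would instead run this as an induction on $d$: choose $x\in I$ superficial for $\FilF$ and simultaneously for $\FilF'$, use Lemma~\ref{B1} to identify $\reg^1\bigl(G(\overline{\FilF})/x^*G(\overline{\FilF})\bigr)=\reg^1(G(\overline{\FilF}/x\overline{M}))$, and thereby reduce the dimension-$d$ estimate to the dimension-$(d-1)$ one plus an $H^0$-correction bounded by Lemma~\ref{A4}. This yields $\reg(G(\FilF))\le\Phi\bigl(\xi(\FilF),\,r(\FilF),\,d\bigr)$ for an explicit $\Phi$.

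For stage two I would exploit that both filtrations are good: from $IF_n=F_{n+1}$ for $n\ge r:=r(\FilF)$ one gets $I^nM\subseteq F_n\subseteq I^{\,n-r}M$ for $n\ge r$, and likewise for $\FilF'$ with $r':=r(\FilF')$. Hence $F_{n+r}\subseteq F'_n$ and $F'_{n+r'}\subseteq F_n$ for all $n\ge 0$, which gives the Hilbert-function comparison $H_{\FilF'}(n-r)\le H_{\FilF}(n)\le H_{\FilF'}(n+r')$, with common leading term since $e_0(\FilF)=e_0(\FilF')$ by Lemma~\ref{A11}. Bounding $\reg(G(\FilF'))$ once and for all by Lemma~\ref{A1} in terms of $\xi(\FilF')$ and $r(\FilF')$ lets me replace $H_{\FilF'}$ by its polynomial $P_{\FilF'}$ in the comparison beyond an explicit threshold, and then extract two-sided bounds on each coefficient $e_i(\FilF)$ from the values $P_{\FilF'}(n\pm\cdot)$, whose size is governed by $\xi(\FilF')$. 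The outcome is $\xi(\FilF)\le\Psi\bigl(\xi(\FilF'),\,r(\FilF),\,r(\FilF'),\,d\bigr)$, which fed into $\Phi$ produces the stated bounds, the large exponent $d(d+1)!-d$ coming from $\Psi$ and the smaller $(d-1)!$ from the residual $r(\FilF)$-dependence carried through $\Phi$.

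The main obstacle is stage two. Turning the Hilbert-function sandwich into clean two-sided bounds on \emph{all} the $e_i(\FilF)$ is delicate precisely for the low-order coefficients: the polynomial inequality $P_{\FilF'}(n-r)\le P_{\FilF}(n)\le P_{\FilF'}(n+r')$ only constrains the top coefficients, so the lower ones must be pinned down using the comparison at explicitly bounded (not merely ``large'') values of $n$, which is exactly what forces the preliminary regularity bound for $\FilF'$ into the argument. Controlling the resulting factorial blow-up of exponents through both the inductive descent of stage one and the substitution of $\Psi$ into $\Phi$, so as to land on precisely the exponents $d(d+1)!-d$ and $(d-1)!$ and the additive constants $-2,-3,-d$, is the remaining bookkeeping burden, and is where the separate treatment of the base cases $d=1$ and $d=2$ becomes necessary.
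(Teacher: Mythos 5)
Your stage two contains a genuine gap, and it is not a matter of bookkeeping. The sandwich $F_{n+r}\subseteq F'_n$, $F'_{n+r'}\subseteq F_n$ and the resulting inequalities $P_{\FilF'}(n-r)\le H_{\FilF}(n)\le P_{\FilF'}(n+r')$ (valid past a threshold controlled by $\reg(G(\FilF'))$) do bound $e_1(\FilF)$: since $e_0(\FilF)=e_0(\FilF')$ by Lemma \ref{A11}, each difference is eventually a sign-definite polynomial of degree $\le d-1$ whose leading coefficient is a linear expression in $e_0$, $e_1(\FilF)$, $e_1(\FilF')$, $r$, $r'$, and one gets $e_1(\FilF')-e_0r'\le e_1(\FilF)\le e_1(\FilF')+e_0 r$. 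But the information stops there: if that leading coefficient is strictly positive, eventual positivity imposes no constraint whatsoever on the lower coefficients, so $e_2(\FilF),\dots,e_d(\FilF)$ remain completely unconstrained --- as you yourself note. Your proposed remedy, comparison at \emph{explicitly bounded} values of $n$, is circular: at such $n$ the middle term of the sandwich is the function value $H_{\FilF}(n)=\ell(M/F_{n+1})$, and to convert bounds on $H_{\FilF}(n)$ into bounds on the $e_i(\FilF)$ you must know that $H_{\FilF}(n)=P_{\FilF}(n)$ at those points, i.e. you need an explicit postulation/regularity bound for $\FilF$ itself --- precisely the statement of Theorem \ref{maintheorem1}. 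The regularity bound for $\FilF'$ that you invoke only controls the two outer terms of the sandwich. Indeed the paper's logical order is the reverse of yours: the coefficient bounds (Theorem \ref{maintheorem2}) are \emph{deduced from} the regularity bound by evaluating $\ell(M/F_{m+1})=P_{\FilF}(m)$ at $m\ge\reg(G(\FilF))$ via (\ref{eq-4}), not the other way around.

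The paper escapes this circle by never bounding $\xi(\FilF)$ at all, and this is the idea your decomposition misses. The only coefficient of $\FilF$ that its proof uses is $e_0(\FilF)=e_0(\FilF')$ (Lemma \ref{A11}); all other quantities entering the estimate, namely $h^0(M)$ and $B=\ell(M/(x_1,\dots,x_d)M)$, are intrinsic to $M$ and the superficial sequence and do not depend on the filtration, so Lemmas \ref{A4} and \ref{A5} may be applied \emph{to} $\FilF'$ to bound them by $\xi(\FilF')$ and $r(\FilF')$. Concretely, for $d=1$ the module $\overline{M}$ is Cohen-Macaulay and $\reg(G(\overline{\FilF}))\le e_0(\FilF')+r(\FilF)-1$ requires only $e_0$ and the generating degree $r(\FilF)$; for $d\ge2$ one chooses $x_1$ superficial for \emph{both} filtrations, uses Lemma \ref{B1} and \cite[Theorem 2.7]{L1} to get $\reg(G(\FilF))\le m+h^0(M)+B{m+d-1 \choose d-1}$ as in (\ref{eq-2}), where $m$ bounds $\max\{\reg(G(\overline{\FilF}/x_1\overline{M})),\,r(\FilF)\}$ by the induction hypothesis applied to the pair of quotient filtrations (Lemma \ref{A3} ii) keeping their coefficients equal to $\xi(\FilF')$). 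Thus all $\FilF$-dependence is channelled through $r(\FilF)$ alone. Your stage one instead bounds $\reg(G(\overline{\FilF}))$ by Lemma \ref{A1}, which consumes $\xi_{d-1}(\FilF)$ and therefore forces the unprovable stage two; replacing Lemma \ref{A1} by the filtration-independence observation above is exactly what repairs the argument.
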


\begin{proof}   Let $\xi  := \xi(\FilF')$, $r:=r(\FilF)$ and $r':=r(\FilF')$. We distinguish two cases \\
If $d = 1$, then $\overline{M}$ is a Cohen-Macaulay module. By \cite[Lemma 1.8]{DH}, \cite[Lemma 2.2]{L1}, Lemma \ref{A11}, $r(\overline{\FilF}) \le r$ and (\ref{eq-111}) 
$$\begin{array}{ll} \reg(G(\overline{\FilF}))  & \leq e_0(G(\overline{\FilF})) + r(\overline{\FilF})  -1  \leq  e_0(\FilF') + r - 1
                                        \leq \xi + r -1.
\end{array}$$
Hence, by Lemma \ref{A2} and applying  Lemma \ref{A4} to $\FilF'$, we then obtain
$$ \begin{array}{ll}
\reg(G(\FilF)) & \leq  \max \{ \reg(G(\overline{\FilF}));\ r\} + h^0(M) \\
&\leq \xi  + r -1 + \xi(\xi+r'+1)\\
& \leq \xi  + r -1 + \xi(\xi+r')+(\xi+r')\\
&= (\xi  + r) + (\xi+1)(\xi+r')-1\\
&\leq (\xi  + r+1) + (\xi+r+1)(\xi+r')-2\\
& \le  (\xi+r'+1)(\xi+r+1)-2.
\end{array}$$

If $d \ge 2$, let $x_1,x_2,...,x_d$ be an $\FilF$-superficial sequence and $\FilF'$-superficial sequence for  $I$. Put $\overline{\FilF} = \FilF/H^0_\mm(M)$ and  $\overline{\FilF'} = \FilF'/H^0_\mm(M)$. We have $\overline{\FilF}/x_1\overline{M}$ and $\overline{\FilF'}/x_1\overline{M}$ be are good $I$-filtrations of $\overline{M}/x_1\overline{M}$ and $\dim (\overline{M}/x_1\overline{M}) =d-1$. Let  $m \geq   \max\{\reg(G(\overline{\FilF}/x_1\overline{M})), r\}$, 
by Lemma \ref{B1}, we have
$$\reg^1(G(\overline{\FilF})/x^*_1G(\overline{\FilF})) = \reg^1(G( \overline{\FilF}/x_1\overline{M})) \le m.$$
Hence, by \cite[Theorem 2.7]{L1},
$$\reg^1(G(\overline{\FilF})) \le m + P_{G(\overline{\FilF})}(m).$$
Since \cite[Lemma 1.6]{DH} and \cite[Lemma 1.7 (i)]{DH}
$$\begin{array}{ll}
 P_{G(\overline{\FilF})}(m)& \le H_{I,\overline{M}/x_1\overline{M}}(m) \\
 &  \le {m+d-1 \choose d-1} \ell \left(\left(\overline{M}/x_1\overline{M}\right)/(x_2,...,x_n) \left(\overline{M}/x_1\overline{M}\right)\right) \le B {m+d-1 \choose d-1}.
 \end{array}$$
 Therefor, by Lemma \ref{A2}, we get
 \begin{equation}\label{eq-2}
 	\reg(G(\FilF)) \le m + h^0(M)+  B {m+d-1 \choose d-1}.
 \end{equation}

If $d = 2$. Let $m= (\xi+r'+1)(\xi+r+1)-2$. Since (i) of the theorem, $r(\overline{\FilF'}/x_1\overline{\FilF'})\le r'$,  $r(\overline{\FilF}/x_1\overline{\FilF})\le r$ and by Lemma \ref{A3} ii), we get
$$\begin{array}{ll}
\reg(G(\overline{\FilF}/x_1\overline{M}))&\le (\xi_1(\overline{\FilF'}/x_1\overline{M})+r(\overline{\FilF'}/x_1\overline{M})+1)(\xi_1(\overline{\FilF'}/x_1\overline{M})+r(\overline{\FilF}/x_1\overline{M})+1)-2.\\
&= (\xi_1(\FilF')+r(\overline{\FilF'}/x_1\overline{M})+1)(\xi_1(\FilF')+r(\overline{\FilF}/x_1\overline{M})+1)-2.\\
& \le (\xi+r'+1)(\xi+r+1)-2=m.
\end{array}$$
Hence, $\max\{\reg(G(\overline{\FilF}/x_1\overline{M})), r\} \le m$. 
From (\ref{eq-111}), (\ref{eq-2}),  and applying Lemma \ref{A4}, Lemma \ref{A5} to $\FilF'$, we get
{\small
$$\begin{array}{ll}
\reg(G(\FilF))&\le  m+h_0(M)+B(m+1)\\
&\le (\xi+r'+1)(\xi+r+1)-2+\xi(\xi+r'+1)^4+ \\ 
&+[\xi + \xi(\xi+r'+1) + \xi(\xi+r'+1)^4][(\xi+r'+1)(\xi+r+1)-1]\\
&\le (\xi+r'+1)(\xi+r+1)+\xi(\xi+r'+1)^3(\xi+r'+1)(\xi+r+1)+ \\ 
&+[\xi + \xi(\xi+r'+1)^2 + \xi(\xi+r'+1)^4](\xi+r'+1)(\xi+r+1) -3 \\
&\le[1+ \xi+\xi(\xi+r'+1)^2 + \xi(\xi+r'+1)^3  +  \xi(\xi+r'+1)^4] (\xi+r'+1)(\xi+r+1) -3 \\
& \le (\xi+r'+1)^5(\xi+r'+1)(\xi+r+1) -3 \\
& = (\xi+r'+1)^6(\xi+r+1) -3.
\end{array}$$}

If $d\ge 3$. 
By the induction hypothessis, $r(\overline{\FilF}/x_1\overline{M})\le r$,  $r(\overline{\FilF'}/x_1\overline{M})\le r'$ and by Lemma \ref{A3} ii), we have 
{\footnotesize
	$$\begin{array}{ll}
	\reg(G(\overline{\FilF}/x\overline{M}))& \le  (\xi_{d-1}(\overline{\FilF'}/x_1\overline{M})+r(\overline{\FilF'}/x_1\overline{M})+1)^{(d-1)d!-d+1}(\xi_{d-1}(\overline{\FilF'}/x_1\overline{M})+r(\overline{\FilF}/x_1\overline{M})+1)^{(d-2)!}-d+1\\
	& =  (\xi_{d-1}(\FilF')+r(\overline{\FilF'}/x_1\overline{M})+1)^{(d-1)d!-d+1}(\xi_{d-1}(\FilF')+r(\overline{\FilF}/x_1\overline{M})+1)^{(d-2)!}-d+1\\
	& \le (\xi+r'+1)^{(d-1)d!-d+1}(\xi+r+1)^{(d-2)!}-d+1.
	\end{array}$$}
We can take 
$$m= (\xi+r'+1)^{(d-1)d!-d+1}(\xi+r+1)^{(d-2)!}-d+1 \geq 2.$$
We see that 
$$1+m+{m+d-1 \choose d-1} \le  (m+1)^{d-1} \text{ for all } m \ge 2.$$
Therefore, by (\ref{eq-2}) %, (\ref{eq-3})
 and applying Lemma \ref{A4}, Lemma \ref{A5}  to $\FilF'$, we get
{\footnotesize
	\begin{align}
		\reg(G(\FilF)) 		& \leq  m+ \xi_d(\FilF') (\xi_{d-1}(\FilF')+r'+1)^{d.d!} +\sum_{k=0}^{d}\xi_{k}(\FilF')(\xi_{k-1}(\FilF')+r'+1)^{k.k!}{m+d-1 \choose d-1} \nonumber  \\	
			& < \sum_{k=0}^{d}\xi_{k}(\FilF')(\xi_{k-1}(\FilF')+r'+1)^{k.k!}\left[ 1+m+{m+d-1 \choose d-1}\right] -d\nonumber  \\			
		& < (\xi_{d}(\FilF')+r' + 1)^{d.d!+1}(m+1 )^{d-1} -d  \nonumber \\% \label{eq-4}\\
				& \leq (\xi_{d}(\FilF')+r' + 1)^{d.d!+1}\left[(\xi+r'+1)^{(d-1)d!-d+1}(\xi+r+1)^{(d-2)!}-d+2\right]^{d-1} -d  \nonumber \\
					& <  (\xi+r'+1)^{d.d!+1 +  [(d-1)d!-d+1](d-1)}(\xi+r+1)^{(d-1)!}-d. \nonumber
\end{align}}
Since $d \ge 3$, the following hold
{\footnotesize
$$	\begin{array}{ll}
		& d.d!+1 +  [(d-1)d!-d+1](d-1) - [d(d+1)!-d]  \\	
		 = & [d+(d-1)^2-d(d+1)]d!+1-(d-1)^2+d \\
		= & (1-2d)d!+3d-d^2 < 0 . \\	
\end{array}
$$
}
Hence $	\reg(G(\FilF)) 		 \leq   (\xi+r'+1)^{d(d+1)!-d}(\xi+r+1)^{(d-1)!}-d.$
\end{proof}

Now we are going to prove the main result of this paper.
\begin{Theorem} \label{maintheorem2}
	Let $\FilF$ and $\FilF'$ be good $I$-filtrations of $M$ with  $\dim(M) = d \geq 1$ 
	$$\FilF:\ M = F_0 \supseteq F_1 \supseteq F_2 \supseteq \cdots \supseteq F_n \supseteq  \cdots $$
	$$\FilF':\ M = F'_0 \supseteq F'_1 \supseteq F'_2 \supseteq \cdots \supseteq F'_n \supseteq  \cdots $$
	Then
	
	{\rm i)}  $|e_1(\FilF)| \le \xi_1(\FilF')(\xi_1(\FilF')+r(\FilF')+1)^{2}(\xi_1(\FilF') + r(\FilF)  +1)$;
	
	{\rm ii)} $|e_2(\FilF)| \le \xi_2(\FilF')(\xi_2(\FilF')+r(\FilF')+1)^{17}(\xi_2(\FilF') + r(\FilF)  +1)^2$;
	
	{\rm iii)} $|e_i(\FilF)| \le \xi_i(\FilF')(\xi_i(\FilF')+r(\FilF')+1)^{(i^3+i^2+i)i!-i^2+1}(\xi_i(\FilF') + r(\FilF)  +1)^{i!}$ if $i\ge 3$.
\end{Theorem}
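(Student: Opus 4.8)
The plan is to identify each $e_i(\FilF)$ with the top coefficient of a graded object of dimension $i+1$ and to recover that coefficient from a Castelnuovo--Mumford regularity bound. Since $e_i(\FilF)=0$ for $i>d$, I assume $1\le i\le d$. For $i\le d-1$ I would first choose elements $x_1,\dots,x_{d-i-1}$ that are simultaneously $\FilF$- and $\FilF'$-superficial for $I$ (possible because $K$ is infinite), pass to $M'=M/(x_1,\dots,x_{d-i-1})M$, and replace it by $\overline{M'}=M'/H^0_\mm(M')$. By Lemma \ref{A3} and \cite[Propositions 1.2 and 2.3]{RV} this leaves $e_i(\FilF)$ and $\xi_i(\FilF')$ unchanged and does not increase $r(\FilF)$ or $r(\FilF')$, while $\dim\overline{M'}=i+1$ and $\depth\overline{M'}>0$. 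Hence I may assume $\dim M=i+1$ and $\depth M>0$, so that $e_i(\FilF)=e_i(G(\FilF))$ is the top Hilbert coefficient of the $(i+1)$-dimensional graded module $G(\FilF)$ and $\reg(G(\FilF))=\reg^1(G(\FilF))$ by \cite[Lemma 1.8]{DH}. (For $i=d$ one argues directly in dimension $d$ with the Hilbert--Samuel function $H_\FilF$ in place of $h_{G(\FilF)}$ and with $\xi_d(\FilF')=\xi_i(\FilF')$, so that Theorem \ref{maintheorem1} and Lemmas \ref{A4}--\ref{A5} may be invoked without the refinement described below.)

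Next I would read $e_i$ off the Hilbert series. Write $\sum_m h_{G(\FilF)}(m)t^m=Q(t)/(1-t)^{i+1}$ with $Q(t)=\sum_j c_jt^j\in\ZZ[t]$. The local cohomology of $G(\FilF)$ vanishes in degrees above its regularity, so $h_{G(\FilF)}(m)=p_{G(\FilF)}(m)$ for $m>\reg(G(\FilF))$ and therefore $\deg Q\le\reg(G(\FilF))+i+1$; moreover $c_j=\sum_k(-1)^k\binom{i+1}{k}h_{G(\FilF)}(j-k)$ and $e_i(G(\FilF))=Q^{(i)}(1)/i!=\sum_j\binom{j}{i}c_j$. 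These give
$$|e_i(\FilF)|\ \le\ 2^{\,i+1}\binom{\reg(G(\FilF))+i+2}{i+1}\!\!\!\max_{0\le m\le\reg(G(\FilF))+i+1}\!\!\! h_{G(\FilF)}(m).$$
Thus it remains to bound $\reg(G(\FilF))$ and the values $h_{G(\FilF)}(m)=\ell(F_m/F_{m+1})\le H_\FilF(m)$ solely in terms of $\xi_i(\FilF')$, $r(\FilF)$ and $r(\FilF')$.

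For the regularity I would apply Theorem \ref{maintheorem1} in dimension $i+1$. Its bound is phrased through $\xi_{i+1}$ of the reduced $\FilF'$, so the only quantity beyond $\xi_i(\FilF')$ that appears is the top Hilbert--Samuel coefficient $e_{i+1}$ of the reduced $\FilF'$. I would control this by studying $\FilF'$ alone: because $\depth M>0$, Lemma \ref{A1} bounds $\reg(G(\FilF'))=\reg^1(G(\FilF'))$ using only the graded coefficients $e_0(\FilF'),\dots,e_i(\FilF')$ and $\Delta(G(\FilF'))=r(\FilF')$---the top Hilbert--Samuel coefficient never enters. Controlling $h_{G(\FilF')}(m)$ for all $m$ by $\xi_i(\FilF')$ and $r(\FilF')$, summing to obtain $H_{\FilF'}$, and comparing with $P_{\FilF'}$ at points beyond the regularity then bounds $e_{i+1}$ of the reduced $\FilF'$, and hence $\xi_{i+1}$ of it, in terms of $\xi_i(\FilF')$ and $r(\FilF')$. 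For the Hilbert values themselves I would use the intertwining $F'_{m+r(\FilF')}\subseteq I^mM\subseteq F_m$ and $F_{m+r(\FilF)}\subseteq I^mM\subseteq F'_m$ forced by the reduction numbers, which yields $H_\FilF(m)\le H_{\FilF'}(m+r(\FilF'))$ and so bounds $h_{G(\FilF)}(m)$ through the already-controlled $H_{\FilF'}$.

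Substituting all of this into the displayed inequality completes the argument, and the remainder is computational: the nested exponentials produced by Lemma \ref{A1}, Lemmas \ref{A4}--\ref{A5} and the three regimes of Theorem \ref{maintheorem1} must be collapsed to the single exponents $(\,\cdot\,)^{2}$, $(\,\cdot\,)^{17}$ and $(\,\cdot\,)^{(i^3+i^2+i)i!-i^2+1}$ of the statement, which is exactly why the conclusion splits into $i=1$, $i=2$ and $i\ge3$ mirroring Theorem \ref{maintheorem1}. The step I expect to be the main obstacle is this conversion of the reduced filtration's data into $\xi_i(\FilF')$: one must bound the new top Hilbert--Samuel coefficient $e_{i+1}$ of $\FilF'$ through the regularity of $G(\FilF')$ rather than through the crude length estimate of Lemma \ref{A5}, so that its contribution does not inflate the exponent of $(\xi_i(\FilF')+r(\FilF')+1)$ beyond the value claimed.
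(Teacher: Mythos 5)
Your skeleton is sensible and partly parallels the paper (simultaneous superficial elements, regularity control via Theorem \ref{maintheorem1}, length control via Lemmas \ref{A4}--\ref{A5}), and the Hilbert-series extraction of $e_i$ together with the intertwining $F'_{m+r(\FilF')}\subseteq I^mM\subseteq F_m$ is a nice device. But there is a structural gap that your last paragraph correctly flags and then does not close. By reducing to dimension $i+1$ you make $e_i(\FilF)$ the top coefficient of the \emph{graded} module $G(\FilF)$, and then every tool you quote in that dimension --- Theorem \ref{maintheorem1}, Lemma \ref{A4}, Lemma \ref{A5} --- is phrased in terms of $\xi_{i+1}$ of the reduced $\FilF'$, which contains the top Hilbert--Samuel coefficient $e_{i+1}$, a quantity the statement forbids. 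Your patch is to bound this $e_{i+1}$ by $\xi_i(\FilF')$ and $r(\FilF')$, but the step you rely on, ``controlling $h_{G(\FilF')}(m)$ for all $m$ by $\xi_i(\FilF')$ and $r(\FilF')$,'' is exactly the hard part and is asserted, not proved: knowing $\reg(G(\FilF'))$ and the Hilbert polynomial does \emph{not} bound the Hilbert function at degrees at or below the regularity (there $h-p$ is a signed sum of lengths of the modules $H^j_{R^+}(G(\FilF'))_m$, which regularity alone does not control; adding a large socle in low degrees changes neither the regularity bound's inputs in an obvious way nor the polynomial). Closing this requires redoing the superficial-element induction of Lemmas \ref{A4}--\ref{A5} in a positive-depth form, where the $h^0(M)$ term vanishes so that $\xi_{\dim-1}$ suffices; that is, the technical heart of the theorem is assumed rather than established.

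Second, even granting the patch, this route cannot yield the inequalities as stated. Any bound for $e_{i+1}$ of the reduced $\FilF'$ obtained this way is itself exponential in $\xi_i(\FilF')$ (exponent of order $(i+1)(i+1)!$, as in Lemmas \ref{A1} and \ref{A4}), and you then feed it into Theorem \ref{maintheorem1} in dimension $i+1$, whose bound is exponential in its $\xi$-argument with exponent of order $(i+1)(i+2)!$. These exponents multiply, so your bound on $\reg(G(\FilF))$, and then on $|e_i(\FilF)|$ via the binomial estimate, carries an exponent of order $(i+1)^2(i+1)!\,(i+2)!$ on $\xi_i(\FilF')+r(\FilF')+1$, vastly exceeding $(i^3+i^2+i)\,i!-i^2+1$; the hoped-for ``collapse'' to the stated exponents cannot occur. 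The paper avoids both problems with one choice: it reduces modulo $d-i$ superficial elements to dimension exactly $i$, killing $H^0_\mm$ at each step so that by \cite[Propositions 1.2 and 2.3]{RV} \emph{all} coefficients $e_0,\dots,e_i$ of both filtrations survive the reduction. Then $e_i(\FilF)$ is the top \emph{Hilbert--Samuel} coefficient, the quantity $\xi_{\dim}(\FilF'_{d-i})=\xi_i(\FilF')$ required by Theorem \ref{maintheorem1} and Lemmas \ref{A4}--\ref{A5} is precisely the allowed data, the lower coefficients are handled by induction on $d$, and the top one by evaluating $\ell(M/F_{m+1})=P_{\FilF}(m)$ at the regularity bound $m$ together with $\ell(M/F_{m+1})\le B{m+d \choose d}$; no compounding of exponents ever occurs.
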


\begin{proof}

	i)  By \cite[(8)]{DH} we have 
	\begin{equation} \label{eq-4} 
	\ell(M/F_{m+1}) = \sum_{i=0}^d (-1)^i e_i(\FilF){m+d-i \choose d-i}
	\end{equation}
	for any $m\ge\reg(G(\FilF))$. For short we write $\xi_i:=\xi_i(\FilF'), r:=r(\FilF)$, and $r':=r(\FilF')$.
	
	Assume that $d=1$. Putting $m:=(\xi_1+r'+1)(\xi_1+r+1)-1$, by Theorem \ref{maintheorem1} i) and (\ref{eq-4}), we have
		\begin{equation} \label{eq-44} 
		e_1(\FilF)=(m+1)e_0(\FilF)-\ell(M/F_{m+1})
			\end{equation}
	By (\ref{eq-44}) and Lemma \ref{A11}, this implies
			\small{	$$
		\begin{array}{ll}
	e_1 & \le (\xi_1+r'+1)(\xi_1+r+1)\xi_0
	 \le \xi_1(\xi_1+r'+1)^2(\xi_1+r+1).	
	\end{array}$$}
	By \cite[Lemma 1.7 i)]{DH}, Lemma \ref{A11} and Lemma \ref{A5}
				$$
	\begin{array}{ll}
-	e_1(\FilF) & \le B(m+1)-(m+1)e_0(\FilF) =(B-\xi_0)(m+1)          \\   
	& \le  \xi_1(\xi_1+r'+1)(\xi_1+r'+1)(\xi_1+r+1)  = \xi_1(\xi_1+r'+1)^2(\xi_1+r+1).	
	\end{array}$$
	Hence
	$$|e_1(\FilF)| \le \xi_1(\xi_1+r'+1)^{2}(\xi_1 + r  +1).$$
	
		Assume that $d\ge 2$. Let $x_1,...,x_d$ be  $\FilF$-superficial sequence and $\FilF'$-superficial sequence for $M$ and $I$. Put $\FilF_0=\FilF$, $\FilF'_0=\FilF'$, $N_0=M$ and $\overline{\FilF}_0 = \FilF_0/H^0_\mm(N_0)$, $\overline{\FilF'}_0 = \FilF_0'/H^0_\mm(N_0)$, $\overline{N}_0 = N_0/H^0_\mm((N_0)$.  We have $\FilF_i=\overline{\FilF}_{i-1}/x_i\overline{N}_{i-1}$,  $\FilF'_i=\overline{\FilF'}_{i-1}/x_i\overline{N}_{i-1}$ be are good $I$-filtrations of $N_i=\overline{N}_{i-1}/x_i\overline{N}_{i-1}$, and $\dim N_i = d-i $ for all $i$, $1 \leq i \leq d$. By \cite[Proposition 1.2  and  Prosition 2.3]{RV}, we get
		\begin{equation}\label{eq-5}
		e_i(\FilF)=e_i(\FilF_{d-i}) \text{  for all } i \le d-1 .
		\end{equation}
		By Theorem \ref{maintheorem1}, $\reg(G(\FilF))  \le m$, (\ref{eq-4}), \cite[Lemma 1.7 ii)]{DH} and \cite[Corollary 4.7.11 a)]{BH}, we have 
		\footnotesize{	\begin{eqnarray}
			|e_d(\FilF)|&=&\left| \ell(M/F_{m+1})-e_0(\FilF) {m+d \choose d}+...+ (-1)^de_{d-1}(\FilF)(m+1) \right| \nonumber \\	
			& \le & \max\left\{B{m+d \choose d}, e_0(\FilF) {m+d \choose d}\right\}+\sum_{i=1}^{d-1}| e_i(\FilF)|{m+d-i \choose d-i} \nonumber \\
			& \le & B(m+d)^d+ \sum_{i=1}^{d-1}| e_i(\FilF)|{m+d-i \choose d-i} \label{eq-51}
				\end{eqnarray}}

		 If $d=2$, by (\ref{eq-5}), $e_1(\FilF)=e_1(\FilF_1)$.	
		 Using the induction hypothessis, $r(\FilF_1)\le r$,  $r(\FilF'_1)\le r'$ and by Lemma \ref{A3} ii), we have \small{
		 	$$\begin{array}{ll}
		 	|e_1(\FilF)|&\le \xi_1(\FilF'_1)(\xi(\FilF'_1)+r(\FilF'_1)+1)^2 (\xi(\FilF'_1)+r(\FilF_1)+1)  \\
		 	& =  \xi_1(\xi_1+r'+1)^{2}(\xi_1 + r  +1).
		 	\end{array}$$}

	 	By Lemma \ref{A5}, Theorem \ref{maintheorem1} ii) and putting 
	 	$m=(\xi+r'+1)^6(\xi+r+1)-2$ into (\ref{eq-51}), we have

{\small
		$$ \begin{array}{ll}
		|e_2(\FilF)| &  \le B(m+2)^2+| e_1(\FilF)|(m+1)\\
		& \le B\left[ (\xi+r'+1)^6(\xi+r+1)\right]^2 + \xi(\xi+r'+1)^2(\xi+r+1) [(\xi+r'+1)^6(\xi+r+1)-1]\\
		& < [\xi + \xi(\xi+r'+1)+\xi(\xi+r'+1)^4]\left[ (\xi+r'+1)^6(\xi+r+1)\right]^2+\\
		& + \xi(\xi+r'+1)^2(\xi+r+1) (\xi+r'+1)^6(\xi+r+1)\\
		& \le \xi[1+(\xi+r'+1)^2+(\xi+r'+1)^4+1 ]\left[ (\xi+r'+1)^6(\xi+r+1)\right]^2\\
		&\le \xi(\xi+r'+1)^5(\xi+r'+1)^{12}(\xi+r+1)^2\\
		& \le \xi(\xi+r'+1)^{17}(\xi+r+1)^2.
		\end{array}$$}	
	iv)  Assume that $d\ge 3$. Using the induction hypothessis, $r(\FilF_i)\le r$,  $r(\FilF'_i)\le r'$,  for all $1 \le i \le d-1$ and by Lemma \ref{A3} ii), we have
{\small		 
	 \begin{eqnarray}
	 |e_1(\FilF)|& = & |e_1(\FilF_{d-1})| \le \xi_1(\FilF'_{d-1})(\xi_1(\FilF'_{d-1})+r(\FilF'_{d-1})+1)^{2}(\xi_1(\FilF'_{d-1}) + r(\FilF_{d-1})  +1) \nonumber \\
	 & \le &\xi_1 (\xi_1+r'+1)^{2}(\xi_1 + r  +1).  \label{eq-6}\\
	 |e_2(\FilF)|& = &|e_2(\FilF_{d-2})| \le \xi_2(\FilF'_{d-2})(\xi_2(\FilF'_{d-2})+r(\FilF'_{d-2})+1)^{17}(\xi_2(\FilF'_{d-2}) + r(\FilF_{d-2})  +1)^2  \nonumber\\
	 & \le & \xi_2(\xi_2+r'+1)^{17}(\xi_2 + r  +1)^2.  \label{eq-7}\\
	 |e_i(\FilF)|& = &|e_i(\FilF_{d-i})| \le \xi_i(\FilF'_{d-i})(\xi_i(\FilF'_{d-i})+r(\FilF'_{d-i})+1)^{(i^3+i^2+i)i!-i^2+1}(\xi_i(\FilF'_{d-i}) + r(\FilF_{d-i})  +1)^{i!} \nonumber\\
	 & \le & \xi_i(\xi_i+r'+1)^{(i^3+i^2+i)i!-i^2+1}(\xi_i + r  +1)^{i!} 	 \text{ if }  3\le i \le d-1.\label{eq-8}
	 \end{eqnarray}
	 }
 To prove the inequallity for $e_d(\FilF)$, we set 
	$$m=(\xi+r'+1)^{d(d+1)!-d}(\xi+r+1)^{(d-1)!}-d.$$
	By (\ref{eq-51}),  Theorem \ref{maintheorem1}, $\reg(G(\FilF))  \le m$ and (\ref{eq-4}), we have 
		\footnotesize{	\begin{eqnarray}
			|e_d(\FilF)|& \le & B(m+d)^d+| e_1(\FilF)|(m+d-1)^{d-1}+ \sum_{i=2}^{d-1}| e_i(\FilF)|(m+d-i)^{d-i} \nonumber \\
			& \le & B(m+d)^d+| e_1(\FilF)|(m+d)^{d-1}+ \sum_{i=2}^{d-1}| e_i(\FilF)|(m+d)^{d-i} \nonumber \\
		& = & \left(B+\dfrac{| e_1(\FilF)|}{m+d}+\dfrac{| e_2(\FilF)|}{(m+d)^2}+\sum_{i=3}^{d-1}\dfrac{ | e_i(\FilF)|}{(m+d)^i}\right)(m+d)^d.	\label{eq-9}	
		\end{eqnarray}}
		By (\ref{eq-6})-(\ref{eq-8}), we get	
	\begin{eqnarray}
	\dfrac{| e_1(\FilF)|}{m+d}& \le & \dfrac{\xi_1(\FilF')(\xi_1(\FilF')+r'+1)^{2}(\xi_1(\FilF') + r  +1)}{(\xi+r'+1)^{d(d+1)!-d}(\xi+r(\FilF)+1)^{(d-1)!} } \nonumber \\
	& \le & \dfrac{\xi(\xi+r'+1)^{2}(\xi + r  +1)}{(\xi+r'+1)^{d(d+1)!-d}(\xi+r+1)^{(d-1)!} }\le 
	  \dfrac{\xi}{2}.\label{eq-10} \\ 
	\dfrac{| e_2(\FilF)|}{(m+d)^2} &  \le & \dfrac{\xi_2(\FilF')(\xi_2(\FilF')+r'+1)^{17}(\xi_2(\FilF') + r  +1)^2}{\left[(\xi+r'+1)^{d(d+1)!-d}(\xi+r+1)^{(d-1)!}\right]^2} \nonumber  \\
	&  \le & \dfrac{\xi(\xi+r'+1)^{17}(\xi + r  +1)^2}{\left[(\xi+r'+1)^{d(d+1)!-d}(\xi+r+1)^{(d-1)!}\right]^2} \le \dfrac{\xi}{2^{2}}.\label{eq-11} \\ 
		\dfrac{| e_i(\FilF)|}{(m+d)^i}& \le &  \dfrac{\xi_i(\FilF')(\xi_i+r'+1)^{(i^3+i^2+i)i!-i^2+1}(\xi_i(\FilF') + r  +1)^{i!}}{\left[(\xi+r'+1)^{d(d+1)!-d}(\xi+r+1)^{(d-1)!}\right]^i} \nonumber \\
		&  \le & \dfrac{\xi(\xi+r'+1)^{(i^3+i^2+i)i!-i^2+1}(\xi + r  +1)^{i!}}{\left[(\xi+r'+1)^{d(d+1)!-d}(\xi+r+1)^{(d-1)!}\right]^i}   \le \dfrac{\xi}{2^{i}} \nonumber  \\
		 \text{ if } 3\le i \le d-1.	 \label{eq-12}
		\end{eqnarray}
From (\ref{eq-9})-(\ref{eq-12}) and Lemma \ref{A5}, we obtain	
										\small{
				$$ \begin{array}{ll}
				|e_d(\FilF)| & \le \left[B+\xi\left(\dfrac{1}{2}+...+\dfrac{1}{2^{d-1}} \right) \right](m+d)^d < (B+\xi) (m+d)^d\\
				& \le \xi\left[1+(\xi+r'+1)+(\xi+r'+1)^{2.2!}+...+(\xi+r'+1)^{d.d!}+1\right] \times \\
				 & \times  \left[(\xi+r'+1)^{d(d+1)!-d}(\xi+r+1)^{(d-1)!}\right]^d\\
				 & \le \xi	(\xi+r'+1)^{d.d!+1}(\xi+r'+1)^{d^2(d+1)!-d^2}(\xi+r+1)^{d!}	\\
				 & = \xi(\xi+r'+1)^{d^2(d+1)!-d^2+d.d!+1}(\xi+r+1)^{d!}  \\
				& = \xi(\xi+r'+1)^{(d^3+d^2+d)d!-d^2+1}(\xi + r  +1)^{d!}.
				\end{array}$$}
\end{proof}

We immediately obtain the following consequence 
\begin{Corollary}\label{co1}
		Let $\FilF$ be a good $I$-filtration of $M$ with  $\dim(M) = d \geq 1$. Then

	{\rm i)}  $|e_1(\FilF)| \le \xi_1(I,M)(\xi_1(I,M)+1)^{2}(\xi_1(I,M) + r(\FilF)  +1)$;
	
	{\rm ii)} $|e_2(\FilF)| \le \xi_2(I,M)(\xi_2(I,M)+1)^{17}(\xi_2(I,M) + r(\FilF)  +1)^2$;
	
	{\rm iii)} $|e_i(\FilF)| \le \xi_i(I,M)(\xi_i(I,M)+1)^{(i^3+i^2+i)i!-i^2+1}(\xi_i(I,M) + r(\FilF)  +1)^{i!}$ if $i\ge 3$.
\end{Corollary}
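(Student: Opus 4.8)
The plan is to deduce the Corollary directly from Theorem \ref{maintheorem2} by specializing the auxiliary filtration $\FilF'$ to the $I$-adic filtration of $M$. First I would observe that the chain $\{I^nM\}_{n\ge 0}$ is itself a good $I$-filtration, since $I\cdot I^nM = I^{n+1}M$ holds for every $n\ge 0$; as recorded in Section \ref{CMHil}, this forces its reduction number to vanish, so that $r(\FilF')=0$. Moreover, by the conventions fixed in Section \ref{CMHil}, the Hilbert coefficients of this filtration are precisely $e_i(\FilF')=e_i(I,M)$, whence $\xi_i(\FilF')=\xi_i(I,M)$ for all admissible $i$.

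The second and final step is the substitution $\FilF'=\{I^nM\}$ in each of the three estimates of Theorem \ref{maintheorem2}. Replacing $r(\FilF')$ by $0$ and $\xi_i(\FilF')$ by $\xi_i(I,M)$ turns each factor $(\xi_i(\FilF')+r(\FilF')+1)$ into $(\xi_i(I,M)+1)$, while the factors $\xi_i(\FilF')$ and $(\xi_i(\FilF')+r(\FilF)+1)$ survive unchanged apart from the relabeling. This transforms part (i) of the theorem into $\xi_1(I,M)(\xi_1(I,M)+1)^{2}(\xi_1(I,M)+r(\FilF)+1)$, part (ii) into $\xi_2(I,M)(\xi_2(I,M)+1)^{17}(\xi_2(I,M)+r(\FilF)+1)^{2}$, and part (iii) into $\xi_i(I,M)(\xi_i(I,M)+1)^{(i^3+i^2+i)i!-i^2+1}(\xi_i(I,M)+r(\FilF)+1)^{i!}$, which are exactly the three bounds claimed.

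There is no genuine obstacle here: all the substance resides in Theorem \ref{maintheorem2}, and the Corollary is merely its specialization to the situation in which one of the two filtrations is taken to be $I$-adic. The only points that require verification are that $\{I^nM\}$ has reduction number $0$ and that its Hilbert coefficients coincide with $e_i(I,M)$, and both are immediate from the definitions collected in Section \ref{CMHil}. This is precisely what justifies the word \emph{immediately} in the statement.
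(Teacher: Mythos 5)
Your proposal is correct and coincides with the paper's own proof: both simply note that the $I$-adic filtration $\{I^nM\}$ is a good $I$-filtration with reduction number $0$ and Hilbert coefficients $e_i(I,M)$, and then substitute $\FilF'=\{I^nM\}$ into Theorem \ref{maintheorem2}. Your write-up is just a more explicit account of the same specialization.
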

\begin{proof} The reduction number of the $I$-adic filtration $\{I^nM\}$ is $0$.  Therefore, applying Theorem \ref{maintheorem2} to $\FilF' = \{I^nM\}$, we then obtain.
\end{proof}

Let $x_1,...,x_d$ be an $\FilF$-superficial sequence for $I$ and $Q:=\left(x_1,...,x_d\right)$.  It is not difficult to prove that also the  $\FilF$ is a  good  $Q$-filtration of $M$. Rossi-Valla in \cite{RV} gave the following filtration
$$\FilE:\ M = F_0 \supseteq F_1 \supseteq QF_1 \supseteq Q^2F_1 \supseteq \cdots \supseteq Q^nF_1 \supseteq  \cdots.$$
\noindent This filtration is  a  good  $Q$-filtration of $M$. As in consequence of the  Theorem \ref{maintheorem2} we have a relationship between $\FilE$ and $\{Q^nM\}$ as follows:

\begin{Corollary} \label{co2} Let $x_1,...,x_d$ be an $\FilF$-superficial sequence for $I$ and $Q:=\left(x_1,...,x_d\right)$. Then

	{\rm i)}  $|e_1(\FilE)| \le \xi_1(Q,M)(\xi_1(Q,M)+1)^{2}(\xi_1(Q,M) + 2)$;
	
	{\rm ii)} $|e_2(\FilE)| \le \xi_2(Q,M)(\xi_2(Q,M)+1)^{17}(\xi_2(Q,M) + 2)^2$;
	
	{\rm iii)} $|e_i(\FilE)| \le \xi_i(Q,M)(\xi_i(Q,M)+1)^{(i^3+i^2+i)i!-i^2+1}(\xi_i(Q,M) + 2)^{i!}$ if $i\ge 3$.
\end{Corollary}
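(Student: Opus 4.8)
The plan is to recognize Corollary~\ref{co2} as nothing more than Corollary~\ref{co1} applied to the good $Q$-filtration $\FilE$, once one checks that the reduction number $r(\FilE)$ is at most $1$. Since $x_1,\dots,x_d$ is an $\FilF$-superficial sequence for $I$ and $\dim M=d$, these $d$ elements form a system of parameters for $M$, so $Q=(x_1,\dots,x_d)$ is $\mm$-primary; thus every result of the previous sections applies verbatim with $I$ replaced by $Q$ and $\FilF$ replaced by $\FilE$. As already observed in the text, $\FilE$ is indeed a good $Q$-filtration of $M$.

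The only computation needed is the reduction number of $\FilE$. Writing $E_n$ for the $n$-th term of $\FilE$, we have $E_0=M$, $E_1=F_1$, and $E_{n}=Q^{n-1}F_1$ for all $n\ge 1$. Hence for every $n\ge 1$ one has $QE_n=Q\cdot Q^{n-1}F_1=Q^nF_1=E_{n+1}$, so the defining equality $E_{n+1}=QE_n$ holds from $n=1$ onward. Therefore $r(\FilE)\le 1$, and in particular $r(\FilE)+1\le 2$. One expects $r(\FilE)=1$ rather than $0$ in general, precisely because $E_1=F_1$ need not coincide with $QM=QE_0$; this is the single point that forces the constant $2$, rather than $1$, into the final bounds.

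It now suffices to invoke Corollary~\ref{co1} for the good $Q$-filtration $\FilE$. For each $i\ge 1$ this yields $|e_i(\FilE)|$ bounded by $\xi_i(Q,M)$ times a power of $\xi_i(Q,M)+1$ times $(\xi_i(Q,M)+r(\FilE)+1)^{c_i}$, where $c_i=1,2,i!$ for $i=1,2,\ge 3$ respectively. Substituting the estimate $r(\FilE)+1\le 2$ replaces the last factor by $(\xi_i(Q,M)+2)^{c_i}$, which produces exactly statements (i), (ii), and (iii). There is no genuine obstacle here beyond this bookkeeping; the entire content of the corollary is the elementary reduction-number computation of the second paragraph, after which the bounds are inherited directly from Corollary~\ref{co1}.
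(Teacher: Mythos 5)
Your proof is correct and takes essentially the same route as the paper: the paper applies Theorem \ref{maintheorem2} directly with $\FilF=\FilE$ and $\FilF'=\{Q^nM\}$ (reduction numbers $1$ and $0$ respectively), which is exactly what your invocation of Corollary \ref{co1} for the good $Q$-filtration $\FilE$ unwinds to, since Corollary \ref{co1} is itself just Theorem \ref{maintheorem2} with the adic filtration taken as $\FilF'$. Your explicit computation showing $r(\FilE)\le 1$ is the same key observation the paper records, so the two arguments coincide.
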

\begin{proof}
The reduction number of the good $Q$-filtration $\FilE$ is $1$ and the reduction number of  $Q$-adic filtration $\{Q^nM\}$ is $0$.  Therefore, applying Theorem \ref{maintheorem2} to $\FilF = \FilE$ and $\FilF' = \{Q^nM\}$, we then obtain.
\end{proof}
\begin{Remark}{\rm
Let $p$ be an integer such that $IM \subseteq \mm^pM$.	Rossi-Valla in \cite[Proposition 2.10 and Proposition 2.11]{RV} gave a sharp upper  bounds for $e_1(\FilF)$ in terms of $e_0(Q,M)$, $e_1(Q,M)$, and $p$ and a sharp lower  bounds for $e_1(\FilE)$ in terms of $e_0(Q,M)$, $e_1(Q,M)$ and other invarians of $M$, respectively. The bounds of Corollary \ref{co1} and Corollary \ref{co2}  are far from being sharp, but they %have some interest because very little is known about relationship between 
show that the Hilbert coefficients $e_i(\FilF)$ and $e_i(\FilE)$ are bounded below and above in terms of $e_0(Q,M),...,e_i(Q,M)$, $i$, and  $r(\FilF)$ (only for $e_i(\FilF)$),  for all $i \ge 1$. 
}	
\end{Remark}
\vskip0.5cm

%\noindent {\bf Acknowledgment}: The authors would like to thank the referee for his/her careful reading
%and the list of suggested corrections which improved the presentation of this paper. 
%Part of this work was done during the stay of the  author at the Vietnam Institute for Advanced Study in Mathematics. 

\end{document}